\theoremstyle{theorem}
\newtheorem{lemma}{Lemma}
\newtheorem{theorem}{Theorem}
\newtheorem*{corollary}{Corollary}
\newtheorem{proposition}{Proposition}
\theoremstyle{remark}
\newtheorem*{remark}{Remark}
\newcommand{\C}{\field{C}}
\newcommand{\Z}{\field{Z}}
\newcommand{\N}{\field{N}}
\newcommand{\R}{\field{R}}
\newcommand{\card}{{\rm card}}
\newcommand{\rank}{{\rm rank}}
\newcommand{\tr}{{\rm tr}}
\newcommand{\Rr}{{\mathcal R}}
\newcommand{\n}{{\mathcal N}}
\newcommand{\F}{{\mathcal F}}
\newcommand{\M}{{\mathcal M}}
\newcommand{\beq}{\begin{equation}}
\newcommand{\eeq}{\end{equation}}
\newcommand{\beqs}{\begin{equation*}}
\newcommand{\eeqs}{\end{equation*}}
\newcommand{\beg}{\begin{gather}}
\newcommand{\eeg}{\end{gather}}
\newcommand{\ora}{\overrightarrow}
\newcommand{\field}[1]{\mathbb{#1}}
\newcommand{\bR}{\field{R}}        
\newcommand{\bN}{\field{N}}        
\newcommand{\bZ}{\field{Z}}        
\newcommand{\bC}{\field{C}}        
\newcommand{\up}{uncertainty principle}
\newcommand{\tfa}{time-frequency analysis}
\newcommand{\stft}{short-time Fourier transform}
\newcommand{\tf}{phase-space}
\newcommand{\fif}{if and only if}
\newcommand{\tfs}{phase-space shift}
\newcommand{\onb}{orthonormal basis}
\newcommand{\rb}{Riesz basis}
\newcommand{\modsp}{modulation space}
\newcommand{\rem}{\noindent\textsl{REMARK:}}
\newcommand{\vs}{\vspace{3 mm}}
\def\inv{^{-1}}
\def\rd{\bR^d}
\def\cd{\bC^d}
\def\zd{\bZ^d}
\def\rdd{{\bR^{2d}}}
\def\zdd{{\bZ^{2d}}}
\def\lrd{L^2(\rd)}
\def\intrd{\int_{\rd}}
\def\intrdd{\int_{\rdd}}
 \def\cF{\mathcal{F}}              
 \def\cB{\mathcal{B}}
\begin{document}
\begin{abstract}
We prove a strong \up\ for Riesz bases in $\lrd $ and show that the
\onb\ constructed by Bourgain possesses the optimal \tf\
localization. 
\end{abstract}

\title[ Localization  properties of bases for
$L^2(\R^d)$]{Phase Space  Localization of Riesz bases for $L^2(\R^d)$} 
\author{Karlheinz Gr\"ochenig}
\address{Faculty of Mathematics \\
University of Vienna \\
Nordbergstrasse 15 \\
A-1090 Vienna, Austria}
\email{karlheinz.groechenig@univie.ac.at}
\author{Eugenia Malinnikova}
\address{Department of Mathematics,
Norwegian University of Science and Technology,
7491, Trondheim, Norway}
\email{eugenia@math.ntnu.no}

\subjclass[2000]{81B99, 81S05, 42C15}
\date{}
\keywords{Uncertainty principle, Balian-Low theorem, Riesz basis,
  localized frame}
\thanks{K.\ G.\ was
  supported in part by the  project P22746-N13  of the
Austrian Science Foundation (FWF). E.\ M.\ was supported by the
Research Council of Norway grant 185359/V30.}
\maketitle


\section{Introduction}
In ~\cite{B} J.\ Bourgain constructed an  orthonormal basis for
$L^2(\bR )$ consisting of functions $f_n \in L^2(\bR )$, such that 
\begin{equation}
  \label{eq:c16}
\sup _{n\in \bN }\Big( \inf_{a\in \bR } \int_{\R}|x-a|^{2}|f_n(x)|^2dx+ 
\inf_{b\in \bR } \int_{\R}|\xi-b|^{2}|\widehat{f_n}(\xi)|^2d\xi\Big)<\infty\, .
\end{equation}
Bourgain remarked that the exponent $2$ of $|x-a|$ and $|\xi -b|$ is
optimal and that there are no orthonormal bases with a better
\tf\  localization.

In this paper we prove the following strong \up\ for Riesz bases for
$\lrd $.  
\begin{theorem}
\label{th:1}
If  $\{f_n\}_{n=1}^\infty$ is a Riesz basis for $L^2(\bR^d)$ and $s>d$, then
\begin{equation}
  \label{eq:c1}
\sup _{n\in \bN }\Big(\inf_{a\in \rd}\int_{\bR^d}|x-a|^{2s}|f_n(x)|^2dx+ 
\inf_{b\in \rd } \int_{\bR^d}|\xi-b|^{2s}|\widehat{f_n}(\xi)|^2d\xi \Big) =\infty\, .  
\end{equation}
\end{theorem}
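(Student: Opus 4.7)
The plan is to argue by contradiction: assume the sup in \eqref{eq:c1} is finite. Writing $z_n=(a_n,b_n)\in\bR^{2d}$ for the phase-space center of $f_n$ and setting $h_n:=\pi(-z_n)f_n$, where $\pi$ denotes a phase-space (time-frequency) shift, the hypothesis becomes the uniform weighted bound
\[
\int_{\bR^d}|x|^{2s}|h_n(x)|^2\,dx+\int_{\bR^d}|\xi|^{2s}|\widehat{h_n}(\xi)|^2\,d\xi\le C.
\]
By the standard equivalence between weighted $L^2$-moments in time and frequency and modulation-space norms, this translates into $\|h_n\|_{M^2_{v_s}}\le C'$ uniformly in $n$, where $v_s(z)=(1+|z|)^s$. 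Consequently $\int(1+|z|)^{2s}|V_\phi h_n(z)|^2\,dz\le C''$ for a fixed Gaussian window $\phi$, and since $s>d$ the majorant $(1+|z|)^{-2s}$ is integrable on $\bR^{2d}$, so Cauchy--Schwarz upgrades this to $\|V_\phi h_n\|_{L^1(\bR^{2d})}\le C'''$; i.e.\ the recentered generators $h_n$ lie uniformly in the Feichtinger algebra $S_0=M^1$.

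Next I exploit the Riesz-basis bounds $A\|f\|^2\le\sum_n|\langle f,f_n\rangle|^2\le B\|f\|^2$ with the coherent states $f=\pi(z)\phi$ to obtain
\[
A\le\sum_n|V_\phi f_n(z)|^2\le B\qquad\text{for every }z\in\bR^{2d},
\]
combined with the identity $|V_\phi f_n(z)|=|V_\phi h_n(z-z_n)|$ and the polynomial decay above. Integrating over a ball $B_R\subset\bR^{2d}$ and splitting the sum according to whether $z_n\in B_R$, the outgoing mass from near-boundary centers and the incoming mass from far centers are both controlled by sums $\sum(1+|z_n|)^{-2s}$ which converge precisely because $s>d$; this yields the two-sided asymptotic
\[
N_R:=\#\{n:z_n\in B_R\}\asymp R^{2d}\qquad(R\to\infty),
\]
so that $\{z_n\}$ has positive and finite Beurling density.

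The crucial structural step is to promote the single-element localization to joint $s$-localization of the whole system. A cross-STFT computation together with the uniform $M^2_{v_s}$-bounds will yield off-diagonal Gram-matrix decay $|\langle f_m,f_n\rangle|\lesssim(1+|z_m-z_n|)^{-s}$. Since $s>d$, Jaffard's theorem places such matrices into a Banach $\ast$-algebra that is inverse-closed in $\mathcal B(\ell^2)$, so the inverse Gram matrix has the same decay and the dual Riesz basis $\{\widetilde{f_n}\}$ is equally well localized at the same centers $z_n$. This puts the system within the class of intrinsically $s$-localized Riesz bases, for which a density theorem forces the Beurling density of $\{z_n\}$ to equal the Nyquist density of $L^2(\bR^d)$.

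The main obstacle lies in the final step. At this critical density, uniform membership of the generators $h_n$ in $S_0=M^1$ is exactly the Balian--Low scenario; Bourgain's construction saturates the borderline $s=d$. For $s>d$ I would extract a contradiction by comparing $\{f_n\}$ with a reference Gabor Riesz basis of the same density (using Jaffard/Sj\"ostrand to transfer properties through the equivalence operator) and then applying a Zak-transform or Poisson-summation identity to a carefully chosen test function, in the spirit of the classical proof of the Balian--Low theorem. Making this transfer rigorous — extending a Balian--Low type obstruction from the rigid Gabor setting to an arbitrary localized Riesz basis at critical density — is the principal technical difficulty of the argument.
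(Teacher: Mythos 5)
Your setup (recentering $f_n$ by phase-space shifts, converting the moment bounds into uniform $M^2_{v_s}$ bounds, deducing that the centers $\{z_n\}$ form a relatively separated set of positive, finite Beurling density, and using Gram-matrix decay plus Jaffard's inverse-closedness to localize the dual basis) runs parallel to the paper's Sections 2, 4 and the lemma on comparison of localizations. But the proposal has a genuine gap exactly where the theorem is actually proved: your final step is an unexecuted hope to ``transfer'' a Balian--Low obstruction from a reference Gabor Riesz basis to the given basis via the Zak transform or Poisson summation. Those tools are tied to the lattice structure and to the existence of a single generating window; they diagonalize the lattice time-frequency shifts and say nothing about an unstructured family $\{f_n\}$ with centers merely of critical density, and membership of all $h_n$ in $S_0$ is not ``exactly the Balian--Low scenario'' in any sense that yields a contradiction. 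The paper's engine is different and is what you are missing: Battle's commutation-relation argument. One sums the uncertainty identity $(xf_n,\nabla g_n)+(\nabla f_n,xg_n)=-d(f_n,g_n)=-d$ over all $n$ whose centers lie in a box of side $R$, expands in the biorthogonal system, observes that the block of terms with both indices inside the box cancels exactly, and then uses the joint localization of $\{f_n\}$ and its dual together with $D^+(\Lambda)<\infty$ to bound the remaining boundary and far-field terms by $o(R^{2d})$; this forces $D^-(\Lambda)=0$, contradicting the positive lower density. Note that only $0<D^-\le D^+<\infty$ is needed there, not equality with the Nyquist density.

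There is a second, quantitative gap: you pass from localization of magnitude $s>d$ directly to localization of the dual basis ``at the same centers.'' Jaffard does give decay of order $s$ for the inverse Gram matrix, but converting that decay back into weighted $L^2$ moments for $\widetilde{f_n}$ costs an extra summability factor; the paper's Lemma on comparison needs $s>3d$ and only recovers dual localization of magnitude $t<s-2d$. This is precisely why the paper inserts a bootstrap step before invoking duality: truncating the STFT inversion formula with a localization operator $A_R$ produces, for $R$ large, a nearby Riesz basis $\{h_n\}$ (a small perturbation in the relevant norms) whose elements have localization of \emph{every} order $t>0$, and only then is the dual localized and the commutation-relation lemma applied. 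Without such an improvement step, your chain of implications does not close at the hypothesis $s>d$, and without the commutation-relation (or some substitute for it) the argument has no mechanism to produce the contradiction at all.
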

This theorem therefore asserts that the Bourgain basis possesses the
best possible  \tf\ localization.
For the case of an orthonormal basis for $L^2(\bR)$ in dimension $d=1$,
Bourgain outlines  a proof strategy for  Theorem~\ref{th:1}.
Precisely, he writes that  
{\slshape{"it has been shown by T.~Steger that $L^2(\bR)$ does not admit
    a basis of the form $f_j=e^{ib_jx} g_j(x-a_j),$ where $g_j$
    satisfies $\sup_j\|g_j\|_{A_\epsilon}<\infty,$ defining 
\[
\|g\|^2_{A_\epsilon}=\int(1+x^2)^{1+\epsilon}|g_j(x)|^2dx+\int(1+\xi^2)^{1+\epsilon}|\widehat{g_j}(\xi)|^2d\xi\, .  
\]
Here $\epsilon>0$ is any strictly positive number. His argument is
based on the fact that the operations $x$ ($x$-multiplication) and
$d/dx$ in the latter basis would become "almost" diagonal operators,
violating the non-commutation property $[d/dx, x]=I$. He also makes
use of a density computation due to Y.~Meyer of the set $\Lambda$ of
pairs $(a_j, b_j)$ in phase space. The condition $\epsilon>0$ is
important in Steger's argument as well as for Meyer's distribution
result to be valid"}}, see \cite{B}.

Some of these arguments have made  their way into the literature.
  A density argument related to Meyer's argument  has appeared in the
fundamental paper of Ramanathan and Steger~\cite{RS} on the density of
Gabor frames and has become the main technique to investigate the
density of frames. See ~\cite{BCHL06a,GR96a,heil07} for some
variations of the Ramanathan-Steger technique. 
The canonical commutation relations were used in Battle's elegant proof of
the Balian-Low theorem~\cite{Bat}.   

However, a full proof of the \up\ of Theorem~\ref{th:1} has not yet been
given. Research has focused mainly on  bases  consisting of
phase-space shifts $f_n(x)= e^{2\pi i b_n x} g(x-a_n)$ of  a single generating function
$g$, so-called Gabor systems.   The theorem of Balian-Low  asserts
that  a basis satisfying~\eqref{eq:c16}  cannot  consist of  a (regular) Gabor
system. We refer the reader to proofs of the theorem in \cite{D} and
\cite{Bat}, to the survey articles on the  Balian-Low theorem and its
generalizations \cite{BHW,CP} and to the monograph \cite{G1} for
detailed discussions of the subject.   Gabor
systems are somewhat easier to handle, because one needs to control
the localization of only one function in contrast to  Bourgain's
case. 
  
In this paper we offer a complete proof of Theorem~\ref{th:1} which  extends the result mentioned in \cite{B}
to higher dimensions and to Riesz bases instead of orthonormal bases. For
orthogonal bases  our proof
follows the outline  of Bourgain.  The case of Riesz bases requires
additional ideas. We will  apply  the theory of localized
frames~\cite{FoG05,G2} to verify that the biorthogonal basis possesses
the same localization properties as the original basis. In a second
step we use a bootstrap argument. We will show that if a Riesz basis
violates condition~\eqref{eq:c1} for $s>d$, then we can construct a new Riesz
basis with optimal \tf\ localization, for instance, with all functions
in a Gelfand-Shilov space of test functions. 

It may seem a lot of effort to prove the non-existence of
well-localized \tf\ bases, but several  arguments are   of interest in
themselves. The proof combines tools from  the density theory of
frames, the canonical commutation relations, the theory of localized
frames, recent \tf\ methods,  and a new argument of how to improve the quality of a given
basis.

One of the corollaries of Theorem~\ref{th:1} is that there is no Riesz basis
of phase-space shifts of the Gaussian function in
$L^2(\bR^d)$. This fact 
implies that there is no subset  $\Lambda\subset  \C^d$ that is  both
 sampling and interpolating for the Bargmann-Fock space $
{\mathcal{F}}^2(\C^d)$.   This statement is well-known in
dimension $d=1$, but seems to have been open in higher dimensions. 

The paper is organized as follows: In Section~2 we show that a
well-localized \tf\ basis must be indexed by a set of density one. In
Section~3 we prove Theorem~\ref{th:1} for the special case when its
biorthogonal basis  is also well-localized, it includes the case of an 
orthonormal basis. Section~4 contains some preparations from \tfa . In
Section~5 we develop the necessary arguments to  prove the uncertainty
principle of Theorem~\ref{th:1} for  Riesz bases. Section~6 elaborates
the non-existence of sets of simultaneous  sampling and interpolation
and concludes with further remarks.


\section{Density Conditions}


We say that a sequence of functions $\{f_n\}_{n=1}^\infty \subset
L^2(\R^d)$ has phase-space localization of magnitude $s$, if  
\[
\sup _{n\in \bN }\Big( \inf_{a\in \rd } \int_{\bR^d}|x-a|^{2s}|f_n(x)|^2dx+ 
\inf_{b\in \rd } \int_{\bR^d}|\xi-b|^{2s}|\widehat{f_n}(\xi)|^2d\xi\Big) <\infty\, .
\]
In this case there exist points $(a_n,b_n) \in \rdd $, such that
\[\sup _{n\in \bN }\Big(\int_{\bR^d}|x-a_n|^{2s}|f_n(x)|^2dx+ 
\int_{\bR^d}|\xi-b_n|^{2s}|\widehat{f_n}(\xi)|^2d\xi\Big) <\infty\, .\]
Then the set $\Lambda = \{ (a_n, b_n)\}_{n=1}^\infty $ is the set in
the  phase-space where the  functions $\{f_n\}_n$ are localized. Note
that there is some freedom in the choice of points $(a_n,b_n)\in
\rdd$. 

We will first  estimate the density of the set
$\Lambda=\{(a_n,b_n)\}_{ n=1}^\infty \subset\bR^{2d}$ both for Riesz bases and frames
for $L^2(\bR ^d)$ which have phase-space localization. The ideas we follow are well known, see
\cite{RS, SP, LP, S}.   
 
Let $\Lambda$ be a  subset of $\bR^{2d}$, we denote by
$D^{+}(\Lambda)$ and $D^{-}(\Lambda)$ its upper and lower Beurling
densities,  
\[
D^+(\Lambda)=\limsup_{r\rightarrow\infty}\sup_{x\in\bR^{2d}}\frac{\card(\lambda\cap
  Q(x,r))}{|Q(x,r)|},\quad 
D^-(\Lambda)=\liminf_{r\rightarrow\infty}\inf_{x\in\bR^{2d}}\frac{\card(\lambda\cap
  Q(x,r))}{|Q(x,r)|},\] 
where $x=(x_1,x_2)\in\bR^d\times\bR^d$ and  
$$ 
Q(x,r)=\{(y_1,y_2)\in\bR^d\times\bR^d: |x_1-y_1|<r, |x_2-y_2|<r\} \, .
$$
 These densities can be also
defined by using dilations of cubes or balls in $\bR^{2d}$ instead of
$Q(x,r)$, as was proved by Landau~\cite{L}. 

 A set $\Lambda \subset \rdd  $ is relatively separated, if $\sup _{x\in
  \rdd } \card \big(\Lambda \cap (x+[0,1]^{2d})\big) <\infty $.
Clearly, if $D^+(\Lambda) <\infty $, then $\Lambda $ is relatively
separated. 
  
\begin{lemma}
\label{l:1}
Suppose that $\{f_n\}_{n=1}^\infty$ is a Riesz basis for $L^2(\bR^d)$ that has phase-space localization of magnitude $s$, $s>0$, at points $\{(a_n,b_n)\}_{n=1}^\infty$, i.e.,
\[
\sup _{n\in\bN}\Big( \intrd |x-a_n|^{2s}|f_n(x)|^2dx + 
\intrd |\xi-b_n|^{2s}|\widehat{f_n}(\xi)|^2d\xi \Big) =  S <\infty \, .
\]
Then $\Lambda=\{(a_n,b_n)\}_{n=1}^\infty \subset \rdd$ is  relatively
separated  and $D^+(\Lambda)\le 1$. 
\end{lemma}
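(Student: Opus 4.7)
I follow the Ramanathan--Steger density method~\cite{RS}, adapted to general $s$-localized basis elements rather than Gabor atoms. Fix $x=(x_1,x_2)\in\rdd$, a large radius $r$, and a margin $R>0$ to be tuned. Let $P_\Omega$ denote multiplication by $\chi_{\{|y-x_1|<r+R\}}$ and $P_\Sigma$ the Fourier multiplier $\cF^{-1}\chi_{\{|\eta-x_2|<r+R\}}\cF$, both orthogonal projections. The self-adjoint trace-class operator $T:=P_\Omega P_\Sigma P_\Omega$ has trace $(2(r+R))^{2d}$, and I write $V^\tau$ for the span of its eigenvectors with eigenvalue exceeding $1-\tau$.

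The basic tail estimate is that for $(a_n,b_n)\in Q(x,r)$, Chebyshev applied to the hypothesis yields
$$\|f_n-P_\Omega f_n\|_2^2 \;\le\; \int_{|y-a_n|>R}|f_n|^2\,dy \;\le\; SR^{-2s},$$
and the same bound for $\|f_n-P_\Sigma f_n\|_2$; combining, $\|f_n-Tf_n\|_2\lesssim\sqrt{S}\,R^{-s}$, and since $T$ has operator norm $\le 1$, also $\|f_n-P_{V^\tau}f_n\|_2\lesssim\tau^{-1}\sqrt{S}\,R^{-s}$, uniformly in $n$. A Ramanathan--Steger trace-comparison argument using the Riesz bounds $A,B$ of $\{f_n\}$ then gives
$$\card(\Lambda\cap Q(x,r))\cdot(A-\varepsilon) \;\le\; B\,\dim V^\tau,$$
where $\varepsilon\to 0$ as $\tau\to 0$ and $R\to\infty$. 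Landau's classical eigenvalue asymptotic for the phase-space truncation $T$ (proven in dimension one via prolate analysis, and extended to $d\ge 2$ by tensor-product factorization in the cube geometry) yields $\dim V^\tau=(2(r+R))^{2d}+o(r^{2d})$. Dividing by $|Q(x,r)|=(2r)^{2d}$ and sending $r\to\infty$ with $R=\sqrt{r}$ gives the preliminary density bound $D^+(\Lambda)\le B/A$. The sharp bound $D^+(\Lambda)\le 1$ then follows by re-running the argument on the Parseval rescaling $\{S^{-1/2}f_n\}$, which is an orthonormal basis with the same index set $\Lambda$ and, by the localized-frames theory developed in Section~5, the same phase-space localization of magnitude $s$ up to a different constant. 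Relative separation is the same counting inequality specialized to $r=1$ with a fixed $R$ large enough for the perturbation step to be effective.

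\textbf{Main obstacle.} Two technical points stand out. First, the Landau-type eigenvalue asymptotic for $T$ requires the spectrum to cluster near $\{0,1\}$ so that $\dim V^\tau$ matches $\tr T$ to leading order; this is classical in dimension one and tensorizable in the cube geometry for $d\ge 2$, but needs care. Second, extracting the sharp constant $1$ from the naive R--S ratio $B/A$ requires reducing to an orthonormal basis via the frame-operator rescaling $S^{-1/2}$, together with the non-trivial preservation of polynomial phase-space decay under $S^{-1/2}$; the latter is exactly the content of the localized-frames theory invoked in Section~5.
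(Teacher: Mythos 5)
Your first stage (Chebyshev tails plus a Ramanathan--Steger comparison giving $D^+(\Lambda)\le B/A$, hence relative separation) is sound in outline, but the step that is supposed to upgrade this to the sharp constant $1$ has a genuine gap. You pass to the orthonormal basis $\{S^{-1/2}f_n\}$ and assert that it inherits phase-space localization of magnitude $s$ at the same points $(a_n,b_n)$ by the localized-frames theory. That theory cannot deliver this in the generality of the lemma: the lemma is claimed for \emph{every} $s>0$, whereas the Jaffard/Fornasier--Gr\"ochenig inversion (Proposition~\ref{frameloc}) requires off-diagonal Gramian decay of order $s>d$ over a relatively separated index set, and converting Gramian decay back into the moment condition $\int|x-a_n|^{2t}|\cdot|^2\,dx<\infty$ --- the analogue of Lemma~\ref{compare}(ii) --- is proved only for $s>3d$ and loses about $2d$ in the exponent. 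So for $0<s\le d$ (indeed for $s\le 3d$ by these estimates) the rescaled basis is not known to have \emph{any} polynomial phase-space localization, and your second run of the argument never gets started. There is also a structural circularity to note: in the paper the localized-frame machinery is invoked only after Lemma~\ref{l:1} has furnished relative separation; your preliminary bound could patch that particular point, but the range-of-$s$ obstruction remains and is fatal for the stated statement.

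The paper obtains the constant $1$ directly, without any rescaling or eigenvalue asymptotics: with $L=P_1P_2P_1$ the phase-space restriction operator one has $\tr(L)=|Q_{R+r}|$, and the trace is bounded from below by summing $(Lf_n,g_n)$ over those $n$ with $(a_n,b_n)\in Q_R$, where $\{g_n\}$ is the biorthogonal system. Since $(f_n,g_n)=1$ and $\|f_n-Lf_n\|\le 3\epsilon\|f_n\|$, each such term is at least $1-3C\epsilon$ with $C=\sup_n\|f_n\|\,\|g_n\|<\infty$, giving $\card\,(\Lambda\cap Q_R)\le(1-3C\epsilon)^{-1}|Q_{R+r}|$ and then $D^+(\Lambda)\le 1$ after letting $R\to\infty$ and $\epsilon\to 0$. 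In other words, pairing against the biorthogonal system inside the trace is exactly what replaces your $B/A$ by $1$; the Landau-type eigenvalue clustering and prolate analysis that you invoke are needed only for the lower density bound of Lemma~\ref{l:2}, not here. If you want to salvage your write-up, replace the frame-operator rescaling step by this biorthogonal trace comparison and the rest of your estimates go through for all $s>0$.
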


\begin{proof}
Fix $\epsilon >0$. We say that a function $g\in L^2(\bR^d)$ is $\epsilon$-concentrated on some set $E\subset\bR^d$ if
\[
\int_E|g(x)|^2dx\ge(1-\epsilon^2)\|g\|^2.\]
 
Since 
$$
\int _{|x-a_n|\geq r} |f_n(x)|^2 \, dx \leq r^{-2s} \int _{|x-a_n|\geq r}
|x-a_n|^{2s} |f_n(x)|^2 \, dx \leq r^{-2s} S \, ,
$$
 there exists $r=r(\epsilon )$ such that $f_n$ is
$\epsilon$-concentrated on $B(a_n,r)$ uniformly in $n$. Likewise $\widehat{f_n}$ is
$\epsilon$-concentrated on $B(b_n,r)$ for every $n$. We fix $(x_0,\xi_0)\in\bR^d\times\bR^d$, consider any $R>0$ and denote $Q_R=Q((x_0,\xi_0),R)$. Remark that if
$ (a_n,b_n)\in Q_R$, then $f_n$ is  $\epsilon$-concentrated
on $B(x_0,R+r)$,  and $\widehat{f_n}$ is 
$\epsilon$-concentrated on $B(\xi_0,R+r)$, where $r=r(\epsilon)$ as above. 

We now apply a standard
estimate of the trace of a time-frequency restriction  operator to
conclude that $D^+(\Lambda)\le 1$, see \cite{RS}. 

Let  $\cF$ be  the Fourier transform and $P_E$ be the  projection
operator $P_E f = \chi _E \, f$ (multiplication of $f$  by the characteristic function of
$E$). The  \tf\ restriction operator is defined by 
\[L=P_{B(x_0,R+r)} (\F^{-1} P_{B(\xi_0,R+r)}\F) P_{B(x_0,R+r)}=P_1 P_2 
P_1\, .\]
  It is well-known, see for example~\cite{FS}, that  
\[\tr(L)=|B(x_0,R+r)||B(\xi_0,R+r)|=|Q_{R+r}|.\]
For each $f_n$ such that $(a_n,b_n)\in Q_R$,  we have 
\[\|f_n-Lf_n\|\le\|f_n-P_1f_n\|+\|P_1\|\|f_n-P_2 f_n\|+\|P_1P_2\|\|f_n-P_1f_n\|\le
3\epsilon\|f_n\|.\]
Now let $\{g_n\}$ be the biorthogonal basis for $\{f_n\}$, i.e.,   $(f_n,g_m)=\delta_{nm}$. Then   
\[
  \tr(L) \ge \sum_{(a_n,b_n)\in Q_R}(Lf_n,g_n) 
 \ge \sum_{(a_n,b_n)\in Q_r}\left((f_n,g_n)-|(f_n-Lf_n,g_n)|\right) 
\ge (1-3C\epsilon)\card(\Lambda\cap Q_R),
\]
where $C=\sup_n\|f_n\|\|g_n\|<\infty$ (since $\{f_n\}$ is a Riesz basis). Thus  
\[  
\card\, \Big(\Lambda\cap Q((x_0,\xi_0),R)\Big)\le (1-3C\epsilon)^{-1}|Q((x_0,\xi_0),R+r)|.\] 
Taking the limit $R\to \infty $, 
we obtain $D^+(\Lambda)\le (1-3C\epsilon)^{-1}$ for every
$\epsilon>0$,  and thus  $D^+(\Lambda)\le 1$, and $\Lambda $ is
relatively separated. 
\end{proof}

\begin{remark}
If $\{f_n\}_{n=1}^\infty$ is a frame that has phase-space localization of magnitude $s>0$ at points $\{(a_n,b_n)\}_{n=1}^\infty$ and satisfies   $\|f_n\|_2\leq C$,  then it is still true  that
$\Lambda=\{(a_n,b_n)\}_{n=1}^\infty $ is a relatively separated  set and that
$D^+(\Lambda)<\infty$. This follows by compactness arguments, see Theorem 3.5  in \cite{JP} for a similar result in dimension $d=1$. 
\end{remark}

\begin{lemma}
\label{l:2}
Suppose that  $\{f_n\}_{n=1}^\infty$ is a frame for $L^2(\bR^d)$ with 
  $\sup _{n\in\bN}\|f_n\|_2 =
C < \infty $. If  $s>d$ and $\{f_n\}_n$ has phase-space localization of magnitude $s$ at points $\{(a_n,b_n)\}_n$, then  $\Lambda=\{(a_n,b_n)\}_{n=1}^\infty$ is relatively separated and $D^-(\Lambda)\ge 1$.  
\end{lemma}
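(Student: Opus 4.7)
The plan is to follow the pattern of Lemma~\ref{l:1}, with two modifications: I replace biorthogonality by the \emph{Parseval normalization} of the frame, and I need to transfer the phase-space localization to the normalized frame. Writing $g_n:=S^{-1/2}f_n$ where $S$ is the frame operator of $\{f_n\}$, the Parseval property of $\{g_n\}$ yields the automatic bound $\|g_n\|_2\le 1$ for every~$n$, which is precisely what produces the sharp constant~$1$ in the density estimate. Relative separateness of $\Lambda$ will be verified first via the compactness argument indicated in the Remark after Lemma~\ref{l:1}.

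Granted that $\{g_n\}$ is phase-space $s$-localized at the same points $(a_n,b_n)$ (see the obstacle below), the density bound is obtained as follows. Fix $\epsilon>0$ and choose $r=r(\epsilon)$ so that each $g_n$ is $\epsilon$-concentrated on $B(a_n,r)\times B(b_n,r)$. For $(x_0,\xi_0)\in\rdd$ and $R>r$, I would take a positive phase-space localization operator $L$ with $L\le I$ and $\tr(L)$ comparable to $|Q_{R-r}|$ (a natural choice is the STFT localization operator, i.e.\ $\langle Lf,f\rangle=\int_{Q_{R-r}}|V_\phi f(z)|^2\,dz$ for a normalized Gaussian window~$\phi$, which has $\tr(L)=|Q_{R-r}|$ and operator norm $\le 1$). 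The Parseval property of $\{g_n\}$ gives the identity
\[
\tr(L)=\sum_n\|L^{1/2}g_n\|_2^2=\sum_n\langle Lg_n,g_n\rangle,
\]
with each term bounded by $\|g_n\|_2^2\le 1$. Split by whether $(a_n,b_n)\in Q_R$: the near terms contribute at most $\card(\Lambda\cap Q_R)$, while the far terms satisfy $\langle Lg_n,g_n\rangle\lesssim r_n^{-2s}$ with $r_n:=\dist((a_n,b_n),Q_{R-r})\ge r$, thanks to the phase-space concentration of $g_n$ transported to the STFT side (the $s$-localization of $g_n$ in both time and frequency gives a bound on $\int|z-(a_n,b_n)|^{2s}|V_\phi g_n(z)|^2\,dz$). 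Summing the tail using the relative separateness of $\Lambda$, the contribution is $o(|Q_R|)$ precisely because $s>d$. Therefore $\tr(L)\le\card(\Lambda\cap Q_R)+o(|Q_R|)$, and dividing by $|Q_R|$ and letting $R\to\infty$ yields $D^-(\Lambda)\ge 1$.

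The main obstacle is the dual-localization claim: that $\{g_n\}=\{S^{-1/2}f_n\}$ retains phase-space $s$-localization at the same~$(a_n,b_n)$, with uniform control on the constants. I would invoke here the theory of intrinsically localized frames of \cite{FoG05,G2}: the off-diagonal decay of the cross-Gram matrix $[\langle f_m,f_n\rangle]$ implied by the localization of $\{f_n\}$ and the relative separateness of $\Lambda$ places this matrix in a Jaffard/Sj\"ostrand-type Banach algebra of matrices that is \emph{inverse-closed} in $B(\ell^2)$ for the range $s>d$. Consequently $S$, $S^{-1}$, and $S^{-1/2}$ preserve this off-diagonal decay, and $\{g_n\}$ inherits the required localization at~$(a_n,b_n)$. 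This inverse-closedness threshold is the unique place where the hypothesis $s>d$ is indispensable; all remaining steps are technical refinements of the arguments from Lemma~\ref{l:1}.
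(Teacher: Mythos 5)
There is a genuine gap, and it sits exactly where you park it: the claim that the canonical Parseval frame $\{g_n\}=\{S^{-1/2}f_n\}$ inherits phase-space localization of magnitude $s$ at the same points $(a_n,b_n)$ \emph{for every} $s>d$. The index set $\Lambda$ lives in phase space $\rdd$, not in $\rd$, so the Jaffard-type inverse-closedness you invoke requires off-diagonal decay of exponent strictly greater than $2d$, not $d$; for $s\in(d,2d]$ the Gramian $[(f_m,f_n)]$ merely has decay of order $s$ and the localized-frame machinery of \cite{FoG05,G2} does not apply. Moreover, even when it does apply ($s>2d$), passing from off-diagonal decay of $S^{-1/2}$ (or of the dual Gramian) back to weighted $L^2$-bounds of the form $\int|x-a_n|^{2t}|g_n|^2\,dx$ costs roughly another $2d$, because one must sum the expansion $g_n=\sum_m c_{nm}f_m$ over the relatively separated set $\Lambda\subset\rdd$; this is precisely the bookkeeping in Lemma~\ref{compare}(ii) of the paper, which needs $s>3d$ and delivers localization only of magnitude $t<s-2d$. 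So your route would establish the lemma only for $s$ large (in effect $s>3d$), not in the stated range $s>d$, which is the sharp regime (the remark after the lemma shows failure at $s=d$). The threshold $s>d$ in the statement comes from the tail summation $\sum_k 2^{2kd}(2^kR)^{-2s}$ and the choice $\delta\in(d/s,1)$, not from any inverse-closedness phenomenon.

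The paper's proof is designed to avoid this obstacle altogether: it never needs localization of the dual (or of the Parseval normalization). It runs the Ramanathan--Steger comparison with a system of phase-space shifted prolate spheroidal functions $\psi_\sigma$ concentrated on $K(a,R-R^\delta)\times K(b,R-R^\delta)$, estimates the off-box coefficients $\sum_{m\in\M}|(\psi_\sigma,f_m)|^2$ using only the localization of the \emph{original} frame $\{f_n\}$ and its upper frame bound, and uses the canonical dual $\{g_n\}$ solely through the inequality $\|\sum_n c_ng_n\|_2\le A^{-1/2}\|c\|_2$ in the projection estimate $\tr(U)\ge(1-A^{-1/2}C_0\epsilon)N(R)^d$ versus $\tr(U)\le\card(\Lambda\cap K(a,R)\times K(b,R))$. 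Your trace identity $\tr(L)=\sum_n(Lg_n,g_n)$ for a Parseval frame and the splitting into near and far terms are fine as such, but unless you can replace the dual-localization step by an argument that works at the bare threshold $s>d$ (e.g., by testing against auxiliary well-concentrated functions as the paper does, rather than against the frame itself), the proposal does not prove the lemma as stated.
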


We remark that the Lemma does not hold for $s=d$. This can be seen
from the construction of an  orthonormal basis in \cite{B}.  

\begin{proof}
Let  $K(y,l)$ denote the  cube with center $y\in \bR^q$ and the side length $2l$,
\[ K(y,l)=\{z\in \bR^q:\|y-z\|_\infty<l\},\]
where $\|z\|_\infty=\max_{1\le s\le q} |z_s|,\ z=(z_1, \dots , z_q)\in\bR^q$.
 
Fix $\epsilon >0$ and choose   $\delta $ in the open interval  
$(d/s,1)$. This is possible by the hypothesis $s>d$. 

\vs

\textbf{Step 1. An estimate for the coefficients $(\psi , f_m)$ of a
  localized function.} 
  Assume
that $\psi\in L^2(\bR^d)$, $\|\psi\|_2=1$, $\psi$ is 
$\epsilon$-concentrated on $K(a,R-R^{\delta})$  and  its Fourier
transform is supported on $K(b,R-R^{\delta})$. Set
$\eta=\psi(1-\chi_{K(a,R-R^{\delta})})$, so that  $\|\eta\|_2\le
\epsilon$.
 
If $\|a_m-a\|_\infty>2^k R$,  then the following estimate holds:
\begin{align*}
|(\psi,f_m)|^2 & \le
2|(\psi(1-\chi_{K(a,R-R^{\delta})}),f_m)|^2+2\left(\int_{K(a,R-R^{\delta})}|\psi(x)||f_m(x)|dx\right)^2
\\ 
& \le
2|(\eta,f_m)|^2+2\left(((2^k-1)R+R^{\delta})^{-s}\int_{K(a,R-R^{\delta})}|x-a_m|^{s}|\psi(x)||f_m(x)|dx\right)^2
\\
 & \le 2|(\eta,f_m)|^2+2((2^k-1)R+R^{\delta})^{-2s}S\, .
\end{align*}
If  $\|b_m-b\|_\infty>2^k R$, then 
\[
|(\psi,f_m)|^2  =|(\widehat{\psi},\widehat{f_m})|^2=\left(\int_{K(b,R-R^\delta)}|\widehat{\psi}(\xi)||\widehat{f_n}(\xi)|d\xi\right)^2\le ((2^k-1)R+R^\delta)^{-2s}S\, . 
\]
Let $\M_0=\{n: (a_n,b_n) \in K (a,R)\times K(b,R)\}$ and $\M$  be the complement of $\M _0$, $\M=\{n:
(a_n,b_n)\not\in K (a,R)\times K(b,R)\}$. We further partition  $\M$ into the sets $\M_k$ as follows:
\[
\M_k=\{n: \max(\|a_n-a\|_{\infty}, \|b_n-b\|_{\infty})\in [2^{k}R,2^{k+1}R)\},\quad k\ge 0.\] 
Since  $D^+(\Lambda)<\infty$ by Lemma~\ref{l:1} (see also the remark after the lemma),
we find that  $\card(\M_k)\le
C_1 (2^kR)^{2d}$ for some constant $C_1$ large enough.
Thus  
\begin{align*}
\sum_{m\in\M}|(\psi,f_m)|^2& =\sum_{k=0}^\infty
\sum_{m\in\M_k}|(\psi ,f_m)|^2 \\
& \le 2 \sum _{m\in \M } |(\eta , f_m)|^2 +
2 S \sum_{k=0}^\infty C_1 2^{2kd}R^{2d}\left(\left(2^{k}-1\right)R+R^{\delta}\right)^{-2s}
 \\
& \le 2 B  \|\eta \|_2^2 + 2SC_1R^{2d-2s\delta}+ 2SC_1 R^{2d-2s} \sum _{k=1}^\infty
2^{2kd}(2^k-1)^{-2s} \, , 
\end{align*}
where $B$ is the upper frame bound of $\{f_n\}_n$.
Since  $s>d$ by assumption, the last sum converges.  Further, $s\delta> d$ and, by choosing
$R$ large enough, the second and third terms can be made arbitrarily small. Given
$\epsilon >0$ and $\delta\in(d/s,1)$, we find that
\begin{equation}
  \label{eq:c13}
\sum_{m\in\M}|(\psi,f_m)|^2 \leq C_0^2 \epsilon ^2 \qquad   \text{ for }
R\geq R_0(\epsilon, \delta, \Lambda, S) \, ,
\end{equation}
with the  constant $C_0$  depending  only on the frame bound $B$ of $\{f_n\}_n$. 


\vs

\textbf{Step 2. Comparison with a basis of  prolate spheroidal
  functions.} 
For given  $\epsilon >0$,  $\delta \in (d/s  , 1) $,   and $R\geq
R_0(\epsilon, \delta, \Lambda, S)$, we now  consider those   prolate spheroidal
 functions $\phi_1,...,\phi_N$ with  $N= N(R)$  that are $\epsilon
d^{-1/2}$ concentrated on $(-R+R^{\delta},R-R^{\delta})$ and whose
Fourier transforms are supported on $(-R+R^{\delta},R-R^{\delta})$. We
refer the reader to  \cite{SP} and \cite{LP} for definitions and
properties of these functions. According to ~\cite{LP} the number of
$\phi _j$ with these concentration properties satisfies  
$\lim_{R\rightarrow \infty}N(R)R^{-2}=1$. 
 
In higher dimensions we  take  tensor products of phase-space
shifts of these prolate spheroidal functions. Let $\sigma = (n_1, \dots , n_d) \in \{ 1, 2, \dots ,
N(R)\}^d$ and define 
\[\psi_{\sigma}(x)=\prod_{j=1}^d e^{-2\pi i
  b_j x_j}\phi_{n_j}(x_j-a_j),\]
then we obtain an orthonormal set of $N^d$ functions $\{\psi _\sigma \}_\sigma$  that are 
 $\epsilon$-concentrated on $K(a,R-R^{\delta})$ and whose Fourier
 transforms are  supported on $K(b,R-R^{\delta})$.  

Now let $\{g_n\}_n$ be the dual frame of $\{f_n\}_n$. If $A>0$ is the
lower frame bound of $\{f_n\}_n$, then we have 
$$
\|\sum _n c_n g_n \|_2^2 \leq A\inv \|c\|_2^2 \qquad \text{ for every
} c\in \ell ^2 \, .
$$

\textbf{Step 3. Density estimate. } 
We now follow the  argument of  Ramanathan and
Steger in~\cite{RS}. Let $S$ be the orthogonal projection of
$L^2(\bR^d)$ onto $\Psi=span\{\psi_{\sigma}: \sigma \in \{ 1 , \dots , N(R)\}^d\}$ and $T$ be the
orthogonal projection onto $G=span\{g_n: n\in \M_0\}$. We consider
$U:\Psi\rightarrow\Psi$, $U=S\circ T$. For each $\psi  \in\Psi$ we obtain   
\begin{align*}
\| \psi  -U\psi \|_2 & =\|S(\psi -T\psi )\|_2\le \|\psi -T\psi \| 
 =  \inf _{g\in G} \|\psi  - g\| \le \|\psi  -\sum_{n\in
  \M _0}(\psi ,f_n)g_n\| \\
&= \|\sum_{m\in \M}(\psi ,f_m)g_m\| 
\le A^{-1/2} \Big(\sum_{m\in \M}|(\psi ,f_m)|^2\Big)^{1/2} \, . 
\end{align*}
Since each basis function  $\psi _\sigma $ is in $ \Psi $ and   satisfies the concentration assumptions from Step~1, the  estimate  \eqref{eq:c13}, implies that 
$$
\|\psi _\sigma - U\psi _\sigma \|_2 \leq A^{-1/2} C_0 \epsilon  \, .
$$
Consequently,  
\[\tr(U)\ge \sum_\sigma (U\psi_\sigma,\psi_\sigma) = \sum_\sigma
\Big(\|\psi _\sigma \|_2^2 -  (\psi
_\sigma - U\psi_\sigma,\psi_\sigma)\Big) \ge (1-A^{-1/2}C_0 \epsilon) N(R)^d\, .\] 
On the other hand,  since $U$
is the composition of two projections,  all eigenvalues of $U$ belong
to $(0,1)$, and therefore  $\tr (U)\le \rank(U)\le \dim
(G)$. Thus  
\[(1-A^{-1/2}C_0\epsilon)N(R)^d\leq \tr(U) \leq \card(\Lambda\cap K(a,R)\times K(b, R))\, .\]

We now use the definition of the Beurling density with cubes in $\rdd
$ instead of balls, and obtain
\begin{align*}
D^-(\Lambda ) &= \lim _{R \to \infty } \inf _{(a,b)\in \rdd }
\frac{\card\big(\Lambda\cap K(a,R)\times K(b, R)\big)}{R^{2d}} \\
& \ge  (1-A^{-1/2}C_0\epsilon) \lim _{R\to \infty } \frac{N(R)^d}{R^{2d}} =
  1-A^{-1/2}C_0 \epsilon \, .  
\end{align*}
As  $\epsilon >0$ was arbitrary, we conclude that $D^-(\Lambda ) \geq
1$. 
\end{proof}

Combining Lemmas~\ref{l:1} and~\ref{l:2}, we obtain the 
density result for localized Riesz bases (recall, however,  that our
aim is to prove that there are no such bases).  
 
\begin{corollary}
  If $s>d$ and $\{f_n\}_{n=1}^\infty$ is a \rb\ for $L^2(\bR^d)$ that has phase-space localization of magnitude $s$ at points $\{(a_n,b_n)\}_{n=1}^\infty$, then the density of $\Lambda=\{(a_n,b_n)\}_n $ is
$D(\Lambda) =  D^+(\Lambda )  = D^-(\Lambda ) =  1$. 
\end{corollary}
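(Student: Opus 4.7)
The proof plan is essentially to combine the two lemmas, observing that every Riesz basis is a frame whose elements have uniformly bounded norms, so Lemma~\ref{l:2} applies in addition to Lemma~\ref{l:1}.

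First I would recall that a Riesz basis $\{f_n\}$ for $L^2(\bR^d)$ satisfies $A\le \|f_n\|_2^2\le B$ for all $n$, where $A,B$ are the Riesz basis bounds; in particular $\sup_n\|f_n\|_2<\infty$, and a Riesz basis is automatically a frame with the same bounds. Thus both Lemma~\ref{l:1} and Lemma~\ref{l:2} apply to $\{f_n\}$ under the hypothesis $s>d$.

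Next, Lemma~\ref{l:1} (with $s>d>0$) yields that $\Lambda$ is relatively separated and
\[
D^+(\Lambda)\le 1.
\]
Lemma~\ref{l:2}, whose hypotheses $s>d$ and $\sup_n\|f_n\|_2<\infty$ are satisfied, yields
\[
D^-(\Lambda)\ge 1.
\]
Since by definition of the Beurling densities one always has $D^-(\Lambda)\le D^+(\Lambda)$, the two inequalities force
\[
1\le D^-(\Lambda)\le D^+(\Lambda)\le 1,
\]
so $D^-(\Lambda)=D^+(\Lambda)=1$. In particular the common value $D(\Lambda)=1$ exists, completing the proof.

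There is no real obstacle here: the corollary is a direct synthesis of the two lemmas, and the only point that deserves a brief comment is why a Riesz basis meets the boundedness hypothesis needed for Lemma~\ref{l:2}. The actual mathematical content sits entirely in the two preceding lemmas; this corollary only packages their conclusions.
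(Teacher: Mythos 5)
Your proposal is correct and matches the paper, which derives the corollary simply by combining Lemmas~\ref{l:1} and~\ref{l:2}; your extra remark that a Riesz basis is a frame with uniformly bounded element norms (so Lemma~\ref{l:2} applies) is exactly the implicit point the paper relies on. No issues.
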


\section{Uncertainty identity}
We first prove Theorem \ref{th:1}  under the  additional condition
that the dual basis is also well-localized.  
The proof extends  Battle's elegant proof of the Balian-Low theorem
\cite{Bat} and rediscovers  Steger's argument mentioned by Bourgain in \cite{B} (see the quote above).

The core of the argument
is the  following uncertainty identity  (the canonical commutation
relations) 
\[(xf,\nabla g)+(\nabla f,xg)= \sum _{j=1}^d (x_j f , \tfrac{\partial
  g}{\partial x_j}) + (\tfrac{\partial
  f}{\partial x_j}, x_j g  ) =  -d(f,g),\] 
which holds provided that $f,g,\tfrac{\partial
  f}{\partial x_j}, \tfrac{\partial
  g}{\partial x_j},  x_jg, x_jf\in L^2(\bR^d)$ for $j=1,\dots , d$.

\begin{lemma} 
\label{l:un}
Assume that $\{f_n\}_{n=1}^\infty$ is  a Riesz basis  for $L^2(\bR^d)$ with the
biorthogonal basis  $\{g_n\}_{n=1}^\infty$. If the bases satisfy the localization
estimates 
\begin{itemize}
\item[(a)]
$\int_{\R^d}|x-a_n|^{2s}|f_n(x)|^2dx+\int_{\R^d}|\xi-b_n|^{2s}|\widehat{f_n}(\xi)|^2d\xi\le
S^2 <\infty$ for every $n$;
\item[(b)]
$\int_{\R^d}|x-a_n|^{2s}|g_n(x)|^2dx+
\int_{\R^d}|\xi-b_n|^{2s}|\widehat{g_n}(\xi)|^2d\xi\le T^2  <\infty$ for every  $n$;
and 
\item[(c)]
 $\Lambda =\{(a_n,b_n)\}_{n=1}^\infty\subset\rdd$  is  relatively
 separated and
 $0<D^-(\Lambda)\le D^+(\Lambda)<\infty$, 
 \end{itemize}
 then $s\le d$.
 \end{lemma}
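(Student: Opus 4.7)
The plan is to derive a contradiction from the CCR identity by applying it to the biorthogonal pair $(f_n,g_n)$ and summing over a large phase-space box. Hypothesis~(c), together with Landau's equivalence of Beurling densities, implies $|F_R|:=\card\{n:(a_n,b_n)\in Q_R\}\asymp R^{2d}$ as $R\to\infty$. Using the stated uncertainty identity with $f=f_n,g=g_n$ and biorthogonality $(f_n,g_n)=1$ gives $\sum_j[(x_jf_n,\partial_jg_n)+(\partial_jf_n,x_jg_n)]=-d$ for each $n$; summing over $n\in F_R$ yields
\[
\Sigma_R:=\sum_{n\in F_R}\sum_{j=1}^d\bigl[(x_jf_n,\partial_jg_n)+(\partial_jf_n,x_jg_n)\bigr]=-d\,|F_R|\asymp -dR^{2d}.
\]

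I would then recast $\Sigma_R$ as a partial commutator trace in the Riesz basis. Define the matrices $M^{(j)}_{mn}:=(x_jf_n,g_m)$ and $N^{(j)}_{mn}:=(\partial_jf_n,g_m)$, which represent $x_j$ and $\partial_j$ on the basis $\{f_n\}$ with dual $\{g_n\}$. Expanding $x_jf_n=\sum_k M^{(j)}_{kn}f_k$ and integrating by parts once gives $(x_jf_n,\partial_jg_n)=-(N^{(j)}M^{(j)})_{nn}$, while expanding $\partial_jf_n$ yields $(\partial_jf_n,x_jg_n)=(M^{(j)}N^{(j)})_{nn}$. Writing $P=P_{F_R}$ and using the standard identity for the restricted trace of a commutator, we obtain
\[
\Sigma_R=\sum_j\tr\bigl(P[M^{(j)},N^{(j)}]P\bigr)=\sum_j\bigl[\tr\bigl(PM^{(j)}(I-P)N^{(j)}P\bigr)-\tr\bigl(PN^{(j)}(I-P)M^{(j)}P\bigr)\bigr].
\]

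The decisive step is to establish off-diagonal decay of the form
\[
|M^{(j)}_{mn}|+|N^{(j)}_{mn}|\le C\bigl(1+|(a_n,b_n)-(a_m,b_m)|\bigr)^{-s}\qquad(m\ne n).
\]
Biorthogonality lets one subtract the diagonal constants: $M^{(j)}_{mn}=((x_j-a_{nj})f_n,g_m)$ and an analogous centering applies to $N^{(j)}_{mn}$. The spatial part $|a_n-a_m|$ is then controlled by splitting the integration domain at the midpoint of $a_n,a_m$ and distributing the weights $|x-a_n|^s$ (from~(a)) and $|x-a_m|^s$ (from~(b)) via Cauchy--Schwarz; the frequency part $|b_n-b_m|$ is handled dually through the Fourier identity $\widehat{x_jf_n}=-(2\pi i)^{-1}\partial_j\widehat{f_n}$ combined with the frequency localization in~(a) and~(b). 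Achieving the full exponent $s$ in the phase-space distance (rather than a naive $s-1$ in each variable separately) is the main technical obstacle and forces one to use the spatial and frequency localization hypotheses simultaneously.

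With this decay in hand, the relative separation of $\Lambda$ lets me estimate the Hilbert--Schmidt norms
\[
\|PM^{(j)}(I-P)\|_{\mathrm{HS}}^2=\!\!\sum_{\substack{n\in F_R\\m\notin F_R}}\!\!|M^{(j)}_{mn}|^2\le C\!\sum_{n\in F_R}\bigl(1+\dist(n,\partial F_R)\bigr)^{2d-2s},
\]
and similarly for $N^{(j)}$. Counting shells inside the cube $Q_R$ gives a bound of order $\max(R^{4d-2s},R^{2d-1})$, which is $o(R^{2d})$ for every $s>d$. The trace Cauchy--Schwarz inequality then produces $|\Sigma_R|=o(R^{2d})$, contradicting $|\Sigma_R|\asymp R^{2d}$ from the CCR identity. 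Therefore $s\le d$.
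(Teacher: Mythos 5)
Your overall skeleton (apply the CCR to each biorthogonal pair, sum over a phase--space box, cancel the inside--inside block by cyclicity, and bound the cross terms) is the same as the paper's, but the argument stands or falls with your ``decisive step'', the pointwise off-diagonal decay $|M^{(j)}_{mn}|+|N^{(j)}_{mn}|\le C(1+|(a_n,b_n)-(a_m,b_m)|)^{-s}$, and that step is neither proved nor provable in the form you need. The midpoint splitting you describe yields only the exponent $s-1$: on the half nearer $a_m$ the extra factor $|x_j-a_{nj}|$ eats one power of the available weight $|x-a_n|^{s}$, and biorthogonality only lets you recenter once. In fact exponent $s$ is false at the level at which you use it: take $d=1$, $f=\phi+\epsilon\,\psi(\cdot-A)$, $g=\gamma(\cdot-A)+\epsilon\,\rho$ with compactly supported smooth bumps, $(\psi,\gamma)=1=-(\phi,\rho)$ and $\epsilon=A^{-s}$; then (a) and (b) hold with centers $(0,0)$ and $(A,0)$, $(f,g)=0$, yet $(xf,g)=\epsilon A+O(\epsilon)\asymp A^{1-s}$. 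So from (a), (b) and biorthogonality the best per-entry decay is of order $s-1$ (in the spatial separation for $M^{(j)}$, and dually in the frequency separation for $N^{(j)}$); your remark that the full exponent $s$ is ``the main technical obstacle'' is precisely the missing --- and unattainable --- ingredient.

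This is not a cosmetic loss: with decay $s-1$ your Hilbert--Schmidt shell count needs $s-1>d$ just to make the sums over $m\notin F_R$ converge, so the scheme only excludes $s>d+1$ and misses the range $d<s\le d+1$ that the lemma must cover. The paper's proof avoids per-entry decay altogether. It keeps only the Bessel-type row and column bounds $\sum_{m\ne n}|c_m^n|^2\le BS^2$ and $\sum_{n\ne m}|c_m^n|^2\le BT^2$, treats the boundary layer of width $r=R^{\delta}$, $\delta\in(d/s,1)$, separately, and for interior $n$ estimates the whole tail over far $m$ at once via the cutoff $h_n^{(j)}=(x-a_n)_jf_n(1-\chi_{B(R-r/2)})$: the unavoidable one-power loss then enters only through $\|h_n^{(j)}\|_2^2\lesssim r^{2-2s}=R^{\delta(2-2s)}\to 0$, while the dual-basis localization supplies the full $(R_k-R+r/2)^{-2s}$ decay in the other factor. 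That mixed $\ell^2$/boundary-layer bookkeeping is exactly what makes the contradiction work for \emph{every} $s>d$; to salvage your matrix formulation you would have to replace the pointwise decay hypothesis by an estimate of this type (and also justify the restricted-trace manipulations, which with only Bessel bounds require the same kind of absolutely convergent estimates).
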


\begin{proof}
We assume that $s>d$ and use the uncertainty identity to derive a contradiction from $(a)-(c)$.
In the following we will write $\ora{(xf, g)}\in \cd $ for the vector with
components $(x_j f , g), j=1, \dots , d$. Likewise $\ora{(\nabla f , g)} = (\tfrac{\partial
  f}{\partial x_j}, g)_{j=1}^d$.  

\vs

\textbf{Step 1. An estimate for non-diagonal coefficients.} Condition
$(a)$ implies  $xf_n\in L^2(\bR^d)^d$,  and then the sequence of 
vectors 
\[
c_m^n=\ora{(xf_n, g_m)}=(x_jf_n, g_m)_{j=1}^d\in \C^d\]
is  well defined.
By the  biorthogonality condition,  for $m\neq n$ 
\[c_m^n=\ora{((x-a_n)f_n,g_m)}.\] Since $\{g_m\}$ is a Riesz basis, 
assumption (b) implies that 
\[
\sum_{m:m\neq n}|c_m^n|^2\le B\| |x-a_n|\, f_n\|^2_2\le B S^2 \, .\]
Next, since  $\overline{c_m^n}=\ora{(xg_m,f_n)}$, we also have  
\[
\sum_{n:n\neq m}|c_m^n|^2\le B \| |x-a_m|\, g_m\|^2_2\le BT^2.
\] 
Here $B$ is  the  upper basis constant  for both Riesz bases $\{g_m\}_m$ and
$\{f_n\}_n$. 

The coefficients 
 \[
 d_m^n=\ora{(\zeta\widehat{f_n},\widehat{g_m})}=(2\pi i)^{-1}\ora{(\nabla f_n,g_m)}\]
 enjoy  similar properties.
  
\vs

\textbf{Step 2. Commutation relations.} 
We now apply  the uncertainty identity to each pair $\{f_n, g_n\}$ and
obtain 
\begin{align}
\label{eq:h}
d& =-\sum_m\Big(\ora{(xf_n,g_m)}\cdot\ora{(\nabla g_n,f_m)}+\ora{(\nabla f_n,g_m)}\cdot\ora{(xg_n,f_m)}\Big)\\
& =- 2\pi i\sum_m(
c_m^n\cdot\overline{d_n^m}-d_m^n\cdot\overline{c_n^m}), \notag
\end{align} 
where $\lambda\cdot \mu=\sum_{j=1}^d\lambda_j\overline{\mu_j}$ is the standard scalar product in $\C^d$.

For each  $R>0$ define $\n(R)=\{n: |a_n|\le R, |b_n|\le R\}$ and
$N(R)=\card \, \n(R) $.  Now we sum up the identities (\ref{eq:h}) for all $n\in \n(R)$,
\begin{align}
\frac{d}{2\pi i} N(R) &= \sum_{n\in\n(R)}\sum_m(-c_m^n\cdot\overline{
  d_n^m}+d_m^n\cdot \overline{c_n^m}) \notag \\
&= \sum_{n,m\in\n(R)}\sum_{j=1}^d ( -(c_m^n)_j(d_n^m)_j+(d_m^n)_j(c_n^m)_j)+
\sum_{n\in\n(R)}\sum_{m\not\in\n(R)}(
-c_m^n\cdot\overline{d_n^m}+d_m^n\cdot\overline{c_n^m}). \label{eq:s}
\end{align}
Clearly, the first sum equals zero. We will derive a contradiction by
showing that the second sum  possesses a slower
growth than $N(R) $. We divide the necessary estimates into several steps. 

\vs

\textbf{Step 3. Points $(a_n, b_n)$ near the boundary.} To  estimate
the second sum,  we partition $\n(R)$ into two 
sets, \[\n(R)=\n(R-r)\cup \big(\n(R)\setminus \n(R-r)\big) 
\, , \]
where  $r=
R^\delta$ for some $\delta \in (d/s, 1)$. 

First, for $n\in\Rr(R-r,R)=\n(R)\setminus \n(R-r)$ we get
\begin{align}
\label{eq:Rr}
\sum_{n\in\Rr(R-r,R)} & \sum_{m\not\in\n(R)}|c_m^n||d_n^m|  \le \notag
\\
& \le \sum_{n\in\Rr(R-r,R)}\Big(\sum_{m: m\neq n}|c_m^n|^2\Big)^{1/2}
\Big(\sum_{m: m\neq n}|d_n^m|^2\Big)^{1/2}\le (N(R)-N(R-r))BS_1T_1 \, .
\end{align}
 The sum of $|d_m^n||c_n^m|$ admits the same estimate.

\vs

\textbf{Step 4. Further partition of $\n(R)$ for interior $(a_n,b_n)$. } 
Next  we  partition the complement of $\n(R)$  into the  rings  $\n_k=\n(R_{k+1})\setminus \n(R_{k})$ where $R_k=2^{k}R$, $k\ge 0$. Then for each $n\in\n(R-r)$ we get
\begin{align*}
\sum_{m\not\in\n(R)}|c_m^n||d_n^m|& =
\sum_{k=0}^\infty \sum_{m\in \n_k}|c_m^n||d_n^m| \\
& \le 
\sum_{k=0}^\infty \sum_{m\in \n_k:
  |a_m|>R_{k}}|c_m^n||d_n^m|+\sum_{k=1}^\infty  \sum_{m\in
  \n_k:|b_m|>R_{k}}|c_m^n||d_n^m|\\
& \le 
\Big(\sum_{k=0}^\infty  \sum_{m\in \n _k:
    |a_m|>R_k}|c_m^n|^2\Big)^{1/2}\Big(\sum_{m: m\neq
    n}|d_n^m|^2\Big)^{1/2}+\\ 
& +\Big(\sum_{m: m\neq
    n}|c_m^n|^2\Big)^{1/2}\Big(\sum _{k=0}^\infty \sum_{m\in \n
    _k:|b_m|>R_{k}}|d_n^m|^2\Big)^{1/2}. 
\end{align*}

\vs

\textbf{Step 5. Main estimate.}
Now we write down an estimate for $c_m^n$ when  $|a_n|<R-r$ and
$|a_m|>R_j$.
Set $h_n^{(j)}(x)=(x-a_n)_jf_n(x)(1-\chi_{B(R-r/2)}(x))$, $j=1,2,...,d$. Then
\[
\|h_n^{(j)}\|_2^2\le \int_{|x|>R-r/2}|(x-a_n)_j|^2|f_n(x)|^2dx.\]
Further, for $|x|\geq r/2$ and $|a_n| \leq R-r$, we have $|x-a_n|\geq
r/2$, and therefore 
\beq
\label{eq:hest}
\sum_{j=1}^d\|h_n^{(j)}\|_2^2\le
(r/2)^{2-2s}\int_{\R^d}|x-a_n|^{2s}|f_n(x)|^2dx\le (r/2)^{2-2s}S^2.
\eeq
Then we have
\begin{align}
\label{eq:st5}
|c_m^n|^2 &=\sum_{j=1}^d|\big((x-a_n)_jf_n,g_m\big)|^2  \\
& \leq
\sum_{j=1}^d\left(2|((x-a_n)_jf_n\chi_{B(R-r/2)},g_m)|^2+2|(h_n^{(j)},g_m)|^2\right)
\notag 
\\ 
& \le
2S^2\|g_m\chi_{B(R-r/2)}\|_2^2+2\sum_{j=1}^d|(h_n^{(j)}, g_m)|^2 \notag 
\\
& \le 2S^2T^2(R_k-R+r/2)^{-2s}+2\sum_{j=1}^d|(h_n^{(j)}, g_m)|^2.
\end{align} 
And since $\{g_m\}_m$ is a Riesz basis, 
\beq
\label{eq:st5R}
\sum_m|(h_n^{(j)}, g_m)|^2\le B\|h_n^{(j)}\|_2^2\, . 
\eeq
Summing up the estimates  (\ref{eq:st5}) over all $k$ and all $m\in \n_k$ such that $|a_m|>R_k$ and taking into account (\ref{eq:hest}) and (\ref{eq:st5R}), we obtain
\beq
\label{eq:st5f}
\sum_{k=0}^\infty\sum_{m\in\n_k:|a_m|>R_k}|c_m^n|^2\le
2S^2T^2\sum_{k=0}^\infty
N(R_{k+1})(R_k-R+r/2)^{-2s}+B (r/2)^{2-2s}S^2. 
\eeq 

 We can  derive similar estimates of $\sum_k\sum_{m}|d_n^m|$ where the summation is over all $k$ and  $m\in \n_k$ such that
$|b_m|>R_k$ by using  the localization inequality for
$\widehat{g_n}$. Likewise, we obtain the  estimates for
$\sum_k\sum_{m}|c_n^m|$ and 
$\sum_k\sum_{m}|d_m^n|$, where the sums are  over all $k$
and $m\in \n_k$ such that $|b_m|>R_k$ and over all $k$ and $m\in \n_k$
such that $|a_m|>R_k$, by using the localization conditions on $g_n$ and 
$\widehat{f_n}$.  

\vs

\textbf{Step 6. Comparison of the densities. } 
Finally we combine the inequality obtained in Step~4 with
(\ref{eq:st5f}) and similar inequalities with other combinations of
indices. Then  we obtain  for every $n\in \n(R-r)$ 
\[
\sum_{m\not\in\n(R)}(|c_m^n||d_m^n|+|c_n^m||d_n^m|)\le
\Big(C_1r^{2-2s}+C_2\sum_{k=0}^\infty N(R_{k+1})(R_k-R+r/2)^{-2s}\Big)^{1/2},
\] 
where $C_1$ and $C_2$ depend on $S,T,B,s$, and  $\sup_n\|f_n\|_2$ and $\sup_n\|g_n\|_2$.
Assumption $(c)$ (the estimate of the upper density $D^+(\Lambda )
<\infty $) implies that 
\[N(R_{k+1})\le D_12^{2d(k+1)}R^{2d}\quad{\rm and}\quad N(R-r)\le D_1(R-r)^{2d}\] 
for some $D_1>0$ and all $R$ large enough. Then for $R$ large enough we obtain
\begin{multline*}
\sum_{n\in\n(R-r)}\sum_{m\not\in\n(R)}(|c_m^n||d_n^m|+|c_n^m||d_m^n|)\le\\
N(R-r)\Big(C_1r^{2-2s}+C_2 D_12^{2d}R^{2d}(r/2)^{-2s}+C_2D_1R^{2d}\sum_{k=1}^\infty 2^{2d(k+1)}(2^{k}R-R+r/2)^{-2s}\Big)^{1/2}\\
\le 
C (R-r)^{2d}R^{d}r^{-s},
 \end{multline*}
 where $C$ depends on $S, T, B, s, D_1, d$ as well as $\sup_n\|f_n\|_2$ and $\sup_n\|g_n\|_2$. 
 
 To finish the proof we recall  that   $r=R^\delta$ and
 $\delta\in(d/s,1)$. 
 Observe  that for
 $r$ large enough the estimate of the upper density implies
 $N(R)-N(R-r)\le D_1R^{2d-1}r$ (we just cover the set $Q(0,R)\setminus
 Q(0,R-r)$ by cubes with side length $r$). Now, combining
 (\ref{eq:s}), (\ref{eq:Rr}), and the last inequality,  we obtain 
 \[
 N(R)\le C_3\Big(N(R)-N(R-r)+R^{3d-\delta s}\Big)\le C_4(R^{2d-1+\delta}+R^{3d-s}).\]
If we now let $R$ go to infinity, we see that
$$ 
D^-(\Lambda ) \leq \lim _{R\to \infty } \frac{N(R)}{R^{2d}} \leq C_4
\lim _{R\to \infty } \Big( R^{\delta -1}+ R^{d-s} \Big)  = 0 \, .
$$ 
This conclusion  contradicts the assumption $(c)$ that the   lower
density estimate $D^-(\Lambda )$ is strictly positive.  
 \end{proof}
 
 Lemma~\ref{l:un}  concludes the proof  of  Theorem \ref{th:1} for
 the case of an orthonormal basis. For a Riesz basis we are not able
 to prove that the \tf\  localization of magnitude $s$ (condition (a) of the lemma) with $s>d$ implies the required localization for its
 biorthogonal basis (stated  in (b)). For the general case a more
 complicated argument is presented in  the next sections.   
 
\section{Some preliminaries on modulation spaces}

The proof of Theorem~\ref{th:1} for Riesz bases requires some tools
from \tfa . We give a minimalistic  account of the required  facts on the  short-time
Fourier transform and modulation spaces. The reader can find  the
details and a  much more general
theory of modulation spaces in  \cite{G1}. 

\vs

\textbf{Short-time Fourier Transform.} 
For $(a,b) \in \rdd $ we write 
$$
\pi (a,b) f(t) = e^{2\pi i b\cdot t} f(t-a)
$$
for the \tfs\ of a function $f$ on $\rd $.   
Let $g(x)=2^{-d/4}e^{-\pi|x|^2}$ be the normalized Gaussian function
on $\bR^d$. We consider the short-time Fourier transform of a function
$\phi\in L^2(\bR^d)$ with respect to $g$, \cite[Chapter 3]{G1} 
\[
V_g\phi(x,\xi)= ( \phi  , \pi (x,\xi )g ) =
\int_{\R^d}\phi(t)\overline{g(t-x)}e^{-2\pi it\cdot \xi}dt,\quad
x,\xi\in \R^d.\]  The inversion formula for the \stft\ yields 
\[
\phi(t)=\iint_{\R^{2d}}V_g\phi(x,\xi)e^{2\pi i\xi\cdot t}g(t-x)dx,
\]
for every $\phi\in L^2(\R^d)$, with a weak interpretation of the
vector-valued integral. 

\vs

\textbf{Modulation Spaces.}
For each $s\ge 0$ let 
\[L^2_s(\R^m)=\{f\in
L^2(\R^m):\|f\|_{L_s^2}^2=\int_{\R^m}|f(x)|^2(1+|x|)^{2s}dx<\infty\}\, .\] 
The
modulation space $M^2_s(\rd )$ is defined by 
$$M_s^2(\rd ) =\{\phi\in L^2(\R^d):
\|\phi \|_{M^2_s}=\|V_g\phi\|_{L^2_s(\rdd )}<\infty\}.
$$
 The following
norm equivalence identifies the \modsp\ $M^2_s$ with the
Fourier-Lebesgue space $L^2_s \cap \cF L^2_s$, see~\cite[Prop.~11.3.1,
12.1.6]{G1}:  
\[c_1\|\phi\|_{M_s^2}\le \|\phi\|_{L_s^2}+\|\widehat{\phi}\|_{L_s^2}\le c_2\|\phi\|_{M_s^2}.
\]

 The adjoint  operator $V_g^*$ of the \stft\ is  defined on $L^2(\R^{2d})$ by
\[
V^*_gF(t)=\iint_{\R^{2d}}F(x,\xi)e^{2\pi i\xi\cdot t}g(t-x)dxd\xi =
\iint _{\rdd } F(x,\xi ) \pi (x,\xi )g (t) dxd\xi \,  .\]
If $F\in L^2_s(\rdd )$,  then by~\cite[Prop.~11.3.2]{G1} $V^*_gF\in M^2_s(\rd )$ and 
\begin{equation}
\label{eq:V*}
\|V^*_gF\|_{M^2_s}\le C\|F\|_{L^2_s}.
\end{equation}

We note that the phase-space localization of magnitude $s$ 
can be rephrased as 
$$
\sup _{n\in\bN}\inf _{(a,b) \in \rdd } \|\pi (a,b) f_n \|_{M^2_s} <\infty \,
.
$$


\textbf{Amalgam spaces.} 
We define the amalgam space  $W(L^2_s)\subset L^2_s (\rdd ) \cap
L^\infty (\rdd )$ as the space of all continuous function on $\rdd $
for which the  norm 
\[\|F\|^2_{W(L^2_s)}=\sum_{k,n\in \zd
} \sup_{x,\xi\in[0,1]^d}|F(x+k,\xi+n)|^2\, (1+|k|+|n|)^{2s}\]
is finite. For $s=0$, $\|F\|_{W(L^2)}\ge \|F\|_2$ obviously.
The continuity of $F$ implies the existence of points $x_{kn}, \xi
_{kn} \in [0,1]^d$, such that 
\begin{equation}
  \label{eq:c9}
\|F\|^2_{W(L^2_s)}=\sum_{k,n\in \zd }|F(k+x_{kn}, n+\xi
_{kn})|^2(1+|k|+|n|)^{2s}\, .
\end{equation}
The definition of $W(L^2_s)$ implies the  following sampling
inequality: If     $\Lambda =
\{\lambda _n\} \subseteq \rdd  $ is  relatively separated, $z\in \rdd
$,  and $F\in W(L^2_s)$, then 
\begin{equation}
  \label{eq:c18}
\Big( \sum _{n} |F(z+\lambda _n)|^2 \, (1+|z+\lambda
_n|)^{2s}\Big)^{1/2} \leq \sup _{k\in \zdd } \card \, \big(\Lambda
\cap 
(k+[0,1]^{2d})\big) \, \|F\|_{W_{L^2_s}} \, .
\end{equation}
The following important inequality links \modsp s with amalgam spaces: 
For every $\phi\in L_s^2(\rd) $ with $\widehat{\phi}\in L_s^2(\rd)$ we have,
e.g., by \cite[Theorem 12.2.1]{G1},
\begin{equation}
  \label{eq:c32}
  \|V_g\phi\|_{W(L^2_s)}\le C\|V_g\phi\|_{L_s^2} = C \|\phi \|_{M^2_s} 
\, .
\end{equation}

\section{Basis modification}

To finish the proof of Theorem~\ref{th:1}, we will modify a given \rb\
$\{f_n\}_n$  for $L^2(\R^d)$ 
that has phase-space localization of magnitude $s>d$ into a \rb\  with  much better localization
properties. The argument in this section may be of independent
interest and can also be  used to prove positive results about frames
and bases.

\begin{proposition}\label{improve} 
Assume that $\{f_n\}_{n=1}^\infty$ is a Riesz basis for $L^2(\R^d)$ that satisfies
 \begin{equation}
 \label{eq:ab}
\sup _{n\in \bN }\Big( \int_{\R^d}|x-a_n|^{2s}|f_n(x)|^2+ 
\int_{\R^d}|\xi -b_n|^{2s}|\widehat{f_n}(\xi)|^2 \Big) <\infty
\end{equation}
for some $s>d$. 
Then there exists a Riesz basis $\{ h_n\}_{n=1}^\infty$ that satisfies   
 \begin{equation}
 \label{eq:abc}
\sup _{n\in \bN }\Big( \int_{\R^d}|x-a_n|^{2t}|h_n(x)|^2+ 
\int_{\R^d}|\xi -b_n|^{2t}|\widehat{h_n}(\xi)|^2 \Big) <\infty
\end{equation}
for \emph{every } $t>0$. 
\end{proposition}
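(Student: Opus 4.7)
The idea is to smooth each $f_n$ by a time-frequency localization operator so that the resulting function has compactly supported, and hence Gaussian-localized, STFT. Let $g$ be the Gaussian window of Section~4 (normalized so $V_g^*V_g=I$) and write $\lambda_n=(a_n,b_n)$. Choose a parameter $r>0$ (to be fixed later) and $\psi\in C_c^\infty(\rdd)$ with $0\le\psi\le 1$, $\psi\equiv 1$ on $B(0,r)$, $\supp\psi\subseteq B(0,2r)$. Set $\psi_n(z)=\psi(z-\lambda_n)$ and define
\[
  h_n \;=\; V_g^*\!\bigl(\psi_n\,V_g f_n\bigr).
\]
By the norm equivalence $\|\phi\|_{M_s^2}\asymp\|\phi\|_{L_s^2}+\|\widehat\phi\|_{L_s^2}$ recalled in Section~4, the hypothesis \eqref{eq:ab} is equivalent to $\sup_n \|(1+|\cdot-\lambda_n|)^s\, V_g f_n\|_{L^2(\rdd)}<\infty$.

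\textbf{Step 1: small $L^2$-perturbation.} Since $V_g^*V_g=I$, we have $f_n-h_n=V_g^*\bigl((1-\psi_n)V_g f_n\bigr)$, and the contractivity of $V_g^*$ together with the tail estimate for $V_g f_n$ yields
\[
  \|f_n-h_n\|_2 \;\le\; \|(1-\psi_n)V_g f_n\|_{L^2(\rdd)} \;\le\; C\,r^{-s}
\]
uniformly in $n$. In addition, the sequence $\{f_n-h_n\}$ is itself uniformly $s$-localized at $\lambda_n$, since $h_n$ inherits the $s$-localization from $f_n$.

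\textbf{Step 2: Riesz basis property.} This is where the geometry of $\Lambda$ must be used. By Lemma~\ref{l:1}, $\Lambda$ is relatively separated with $D^+(\Lambda)\le 1$. Combining the uniform $s$-localization of $\{f_n-h_n\}$ with the amalgam embedding \eqref{eq:c32} and the sampling inequality \eqref{eq:c18} converts the weighted $L^2$-control of $V_g(f_n-h_n)$ into a Bessel bound for $\{f_n-h_n\}$ of order $O(r^{-2s})$. For $r$ chosen large enough, this bound is strictly smaller than the lower Riesz bound of $\{f_n\}$, so a Paley-Wiener-type perturbation argument produces the conclusion that $\{h_n\}$ is itself a Riesz basis for $\lrd$.

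\textbf{Step 3: arbitrary time-frequency localization of $h_n$.} Since $F_n:=\psi_n V_g f_n$ is supported in $B(\lambda_n,2r)$, and $V_g h_n=V_g V_g^* F_n$ is a reproducing-kernel integral with kernel satisfying $|V_g g(w-z)|\le C\,e^{-\pi|w-z|^2/2}$ for the Gaussian window, we get
\[
  |V_g h_n(w)| \;\le\; \int |F_n(z)|\, e^{-\pi|w-z|^2/2}\,dz \;\le\; C(r)\,e^{-\pi|w-\lambda_n|^2/8} \qquad\text{for }|w-\lambda_n|>4r.
\]
This Gaussian decay, together with the obvious $L^\infty$-bound on $V_g h_n$ on the compact region $|w-\lambda_n|\le 4r$, shows that $h_n\in M_t^2$ for every $t>0$ with norm uniform in $n$. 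Applying once more the modulation-space norm equivalence of Section~4, this yields \eqref{eq:abc} for every $t>0$.

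\textbf{Main obstacle.} The delicate step is Step~2. The pointwise bound $\|f_n-h_n\|_2\le\epsilon$ is not sufficient by itself to perturb the Riesz basis: one needs to control the Bessel constant of the difference sequence. This is impossible without the structural information that $\{f_n-h_n\}$ is jointly localized at the relatively separated set $\Lambda$ of density at most one, and it is precisely this input that makes the amalgam/sampling apparatus of Section~4 unavoidable in passing from the continuous weighted $L^2$-estimate to the discrete Bessel bound.
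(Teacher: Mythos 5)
Your proposal is essentially the paper's own proof: truncate the STFT of $f_n$ near $(a_n,b_n)$ by a localization operator, show the difference system $\{f_n-h_n\}$ has small Bessel bound by combining its joint localization over the relatively separated set $\Lambda$ (Lemma~\ref{l:1}) with the amalgam embedding \eqref{eq:c32} and sampling inequality \eqref{eq:c18}, conclude by Paley--Wiener perturbation, and get \eqref{eq:abc} for all $t>0$ from the compact support of the truncated STFT (your reproducing-kernel Gaussian-decay variant of this last step is a harmless alternative to the paper's use of $\|V_g^*F\|_{M^2_t}\le C\|F\|_{L^2_t}$). The only slip is quantitative: the Bessel bound of the difference system obtained this way is not $O(r^{-2s})$ but $O(r^{2(\sigma-s)})$ for a chosen $\sigma\in(d,s)$, since an exponent larger than $d$ must be spent on the summation over $\Lambda$ --- this is exactly where $s>d$ enters --- but the argument only needs smallness for large $r$, so the proof stands.
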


\begin{proof}
  The new basis is obtained by a  modification of $\{f_n\}_n$.   We use  the inversion formula for 
the short-time Fourier transform and truncate it. In the language of
\tfa\ we apply a localization operator to $f_n$. Precisely, let $R>0$
and  $Q(R)=Q(0,R)=B(0,R)\times B(0,R)\subset\R^{2d}$.
Then the localization operator $A_R$ is defined by 
  \[
  A_Rf(t) = \iint_{Q(R)}V_g f(x,\xi)e^{2\pi i\xi\cdot t}g(t-x)dxd\xi,\quad f\in L^2(\R^d)\, .
\]
Intuitively, $A_Rf$ is the part of $f$ that is concentrated on the set
$Q(R)$ in the phase space. For more on localization operators
see for instance~\cite{Wong1999,CG03}. 

We recast the assumption as follows: 
 $\{f_n\}_n$ is  a Riesz basis for $L^2(\R^d)$,  $f_n(x)=e^{2\pi i b_n x}\phi_n(x-a_n)$, $s>d$ and
\[\sup_{n\in\N}\|\phi_n\|^2_{M_s^2}\le S<\infty.\]
 We  now define 
\begin{equation}
\label{eq:hn}
\psi_n= A_R\phi _n  = \iint_{Q(R)}V_g\phi_n(x,\xi)\pi (x,\xi )g\,
dxd\xi \, 
\end{equation}
and the modified basis $h_n (x) = h_n^{R}(x) = e^{2\pi i b_n x}\psi_n(x-a_n)$. 

\vs

\textbf{Claim. }  For $R$ large enough $\{h_n\}_{n=1}^\infty$ 
is a Riesz basis for $L^2(\R^d)$. 


To prove the claim,  
 it suffices to
show that for every $\epsilon>0$ there exists $R$ such that for $h_n=h_n^{R}$
 and every sequence $\{c_n\}_n\in \ell ^2$ the inequality 
\[
\Big\|\sum_n c_n(f_n-h_n)\Big\|_2\le \epsilon\|c_n\|_2\]
holds. If $\{f_n \}_n$ is a Riesz basis with the lower basis constant $A>0$, then 
$$
\|\sum _n c_n h_n\|_2 \geq \|\sum _n c_n f_n\|_2 - \|\sum _n c_n (f_n
- h_n)\|_2 \geq (A-\epsilon ) \|c\|_2 \, ,
$$
and so $\{h_n\}_n$ is a Riesz basis. 

Using once again the crucial assumption $s>d$, we now choose a number
$\sigma $ such that $d< \sigma < s$. Using the inversion formula for
the \stft\ 
and~\eqref{eq:hn}, we write 
$$
\phi _n - \psi _n = \intrdd \big(1-\chi _{Q(R)}(x,\xi )\big) V_g \phi _n (x,\xi
  ) \pi (x,\xi)g\, dxd\xi \, ,$$
and estimate the $M^2_\sigma $-norm of $\phi_n -\psi _n$ with 
 (\ref{eq:V*}) as 
 \begin{align*}
\|\phi_n-\psi_n\|_{M^2_{\sigma }}^2& \le \intrdd  \big( 1 - \chi_{ Q(R)}
  (x,\xi )    \big) | V_g\phi_n(x,\xi)| ^2 (1+|x|+|\xi |)^{2\sigma } \, dxd\xi \\
&\leq (1+R)^{2(\sigma -s)} \intrdd  \big( 1 - \chi_{ Q(R)}
  (x,\xi )    \big) | V_g\phi_n(x,\xi)| ^2 (1+|x|+|\xi |)^{2s  } \, dxd\xi \\
&\leq C (1+R)^{2(\sigma -s)} \|\phi _n \|_{M^2_s}^2 \leq C(1+R)^{2(\sigma
  -s)} S \, .
 \end{align*}
Choosing now $R$ large enough, we have 
$$
\|\phi _n - \psi _n \|_{M^2_\sigma } < \epsilon \qquad {\text{for all}}\  n \, .
$$
Then we have, with \eqref{eq:c9} and a suitable choice of points
$(x_{k,m},\xi_{k,m})\in[0,1]^{2d}$, that 
\begin{multline*}
\Big\|\sum_n c_n(f_n-h_n)\Big\|^2_2 
=\Big\|\sum_nc_nV_g(f_n-h_n)\Big\|_2^2\le 
C\Big\|\sum_n c_nV_g(f_n-h_n)\Big\|_{W(L^2)}^2 \\
=\sum_{(k,m)\in\Z^{2d}}\Big|\sum_n c_nV_g(f_n-h_n)(x_{k,m}+k,
  \xi_{k,m}+m)\Big|^2 \\
\leq \sum _{(k,m) \in \zdd } \Big( \sum _n |c_n| \, 
  |V_g(\phi_n-\psi_n)(x_{k,m}+k-a_n, \xi_{k,m}+m-b_n)| \Big)^2\\ 
\le \sum _{(k,m) \in \zdd }\Big(\sum_n 
  (1+|k-a_n|+|m-b_n|)^{-2\sigma}\Big)\, \times \\
\times  \Big(\sum _{n}|c_n|^2  |V_g(\phi _n -
\psi _n)(x_{k,m}+k-a_n, \xi_{k,m}+m-b_n)|^2
(1+|k-a_n|+|m-b_n|)^{2\sigma }\Big) \, .
\end{multline*}

Since $\sigma >d$ and $\Lambda $ is relatively separated,  the sum
$\sum_{n}   (1+|k-a_n|+|m-b_n|)^{-2\sigma}$ is uniformly bounded
independent of $k$ and $m$. 
Thus we obtain
\begin{multline*}
\Big\|\sum_n c_n(f_n-h_n)\Big\|^2_2\le \\
\le C\sum_n |c_n|^2\sum_{k,m}|V_g(\phi _n -
\psi _n)(x_{k,m}+k-a_n, \xi_{k,m}+m-b_n)|^2
(1+|k-a_n|+|m-b_n|)^{2\sigma }\, .
\end{multline*}
By~\eqref{eq:c18} and~\eqref{eq:c32} we estimate further that 
\begin{align*}
\sum _{k,m}  |V_g(\phi _n -
\psi _n)(& x_{k,m}+k-a_n, \xi_{k,m}+m-b_n)|^2
(1+|k-a_n|+|m-b_n|)^{2\sigma } \\
& \leq C_2 \|V_g(\phi _n - \psi
_n)\|_{W(L^2_\sigma )}^2 \leq C_3\|\phi _n - \psi _n\|_{M^2_\sigma }^2 <
C_3 \epsilon ^2 \, .
\end{align*}

Collecting all estimates, we arrive at
$$
\Big\|\sum_n c_n(f_n-h_n)\Big\|^2_2 \leq 
C_3\epsilon^2\sum_n |c_n|^2
=C_3 \|c\|_2^2 \, \epsilon ^2 \, .
$$

Consequently, $\{h_n\}_n$ is a \rb\ for $\lrd $.  

Finally, applying (\ref{eq:V*}) once again, we obtain,  for arbitrary
$t>0$, 
\[
\|\psi _n\|_{L^2_t} + \|\hat{\psi } _n\|_{L^2_t} \leq C
\|\psi_n\|_{M^2_{t}}\le C'\|V_g\phi_n\chi_{Q(R)}\|_{L^2_{t}}\le C_t 
R^{t}\, ,\]
which is \eqref{eq:abc}.
\end{proof} 

\rem\  The construction of $\psi _n$ implies that $|\psi _n(t)
| \leq C e^{-\alpha |t|^2}$ and $|\widehat{\psi _n}(\xi  ) | \leq C
e^{-\beta |\xi |^2}$ for some $\alpha , \beta , C >0$. Thus the
perturbed basis belongs to the Gelfand-Shilov space $S^{1/2,1/2}$, the
smallest space of test functions that is invariant under the Fourier
transform. 

\vs


To complete the proof of Theorem \ref{th:1} we will show
that the biorthogonal basis $\{\widetilde{h_n}\}_n$
satisfies~\eqref{eq:ab}  for some $s$ large enough and then  apply Lemma \ref{l:un}.

The modified basis $\{h_n\}_n$ possesses enough phase-space localization so
that the theory of localized frames ~\cite{FoG05,G2} is
applicable. We say that a frame $\{h_\lambda : \lambda \in\Lambda \}$
is $s$-localized over the index set $\Lambda \subseteq \rd $, if its  Gramian satisfies 
\begin{equation}
  \label{eq:c20}
|(h_\mu , h_\lambda )| \leq C (1+|\lambda -\mu |)^{-s} \qquad \text{
  for all } \lambda , \mu \in \Lambda \, .
\end{equation}
The main result about localized frames asserts that the dual frame
possesses the same type of localization. 
Specifically we need the following result taken from~\cite[Thm.~1.1, Cor.~3.7]{FoG05}

\begin{proposition}
 \label{frameloc}
  Let $\Lambda \subseteq \rd $ be a relatively separated set and
  $\{h_\lambda : \lambda \in \Lambda \}$ be a frame for $\lrd
  $. Assume that $\{h_\lambda \}$ is $s$-localized for $s >d$. Then
  the (canonical) dual frame is also $s$-localized, i.e., 
  \begin{equation}
    \label{eq:43}
|(\widetilde{h_\mu} , \widetilde{h_\lambda } )| \leq C' (1+|\lambda -\mu |)^{-s} \qquad \text{
  for all } \lambda , \mu \in \Lambda \, .    
  \end{equation}
\end{proposition}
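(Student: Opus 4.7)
My plan is to prove Proposition \ref{frameloc} by identifying the dual Gramian $\widetilde{G}_{\mu,\lambda} = (\widetilde{h_\mu}, \widetilde{h_\lambda})$ with the Moore--Penrose pseudoinverse $G^\dagger$ of the frame Gramian $G_{\mu,\lambda}=(h_\lambda,h_\mu)$, and then invoking a Wiener-type inverse-closedness theorem (Jaffard's lemma) to transfer the polynomial off-diagonal decay from $G$ to $G^\dagger$.

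First I would introduce the Jaffard class $\mathcal{A}_s(\Lambda)$ of matrices indexed by a relatively separated set $\Lambda\subseteq\rd$ whose entries satisfy $|M_{\mu,\lambda}|\le C(1+|\lambda-\mu|)^{-s}$. Hypothesis \eqref{eq:c20} says precisely $G\in\mathcal{A}_s$. Viewed as an operator on $\ell^2(\Lambda)$, we have $G = C C^*$, where $C\colon \lrd \to \ell^2(\Lambda)$, $Cf = ((f,h_\lambda))_\lambda$, is the analysis operator. It follows that $G$ is self-adjoint and non-negative with $\mathrm{spec}(G)\subseteq\{0\}\cup[A,B]$, where $A, B$ are the frame bounds: these are the extreme values of the Rayleigh quotient $(Gc,c)/\|c\|^2$ on $(\ker G)^\perp = \mathrm{range}(C)$. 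The essential point is the \emph{spectral gap} separating $0$ from $[A,B]$.

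Next I would show $\widetilde{G} = G^\dagger$. From the intertwining identity $CS = CC^*C = GC$, together with the invertibility of the frame operator $S = C^*C$ on $\lrd$, I can derive $CS^{-1} = G^\dagger C$ (using $G^\dagger G = I$ on $\mathrm{range}(C)$) and by adjoining $S^{-1}C^* = C^*G^\dagger$. Since the matrix with entries $(S^{-1}h_\mu, S^{-1}h_\lambda) = (S^{-2}h_\mu, h_\lambda)$ is exactly $CS^{-2}C^*$, we compute
\[
\widetilde{G} \;=\; CS^{-2}C^* \;=\; (CS^{-1})(S^{-1}C^*) \;=\; G^\dagger(CC^*)G^\dagger \;=\; G^\dagger G G^\dagger \;=\; G^\dagger.
\]

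Finally I would apply Jaffard's theorem, which asserts that $\mathcal{A}_s(\Lambda)$ is inverse-closed in $B(\ell^2(\Lambda))$ for $s>d$. Because of the spectral gap, I can write $G^\dagger$ via the Dunford--Riesz holomorphic functional calculus as
\[
G^\dagger \;=\; \frac{1}{2\pi i}\oint_\gamma z^{-1}(zI-G)^{-1}\,dz,
\]
where $\gamma$ is a contour enclosing $[A,B]$ but staying away from $0$. Jaffard's theorem supplies $(zI-G)^{-1}\in\mathcal{A}_s$ with $\mathcal{A}_s$-norm bounded uniformly in $z\in\gamma$, so the Bochner integral converges in $\mathcal{A}_s$, a Banach algebra under its natural norm. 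Hence $G^\dagger\in\mathcal{A}_s$, which is \eqref{eq:43}.

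The hard part will be twofold. The deeper ingredient is Jaffard's inverse-closedness theorem itself, whose proof relies on a delicate commutator/Wiener-style iteration to control off-diagonal decay of resolvents; I would cite it rather than reprove it. The second obstacle, specific to the present setting, is that $G$ is not invertible on $\ell^2(\Lambda)$ whenever the frame is overcomplete, which forces the passage to the pseudoinverse and makes the spectral gap of $G$ at $0$ not just convenient but essential — it is exactly what allows the holomorphic functional calculus to produce $G^\dagger$ as a norm-convergent contour integral of resolvents lying in $\mathcal{A}_s$.
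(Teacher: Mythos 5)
Your argument is correct: the identification of the dual Gramian with the Moore--Penrose pseudoinverse ($\widetilde{G}=CS^{-2}C^*=G^\dagger$), the spectral gap $\mathrm{spec}(G)\subseteq\{0\}\cup[A,B]$ coming from the frame inequalities, and the contour-integral representation of $G^\dagger$ combined with Jaffard's inverse-closedness of $\mathcal{A}_s(\Lambda)$ for $s>d$ fit together without gaps (the mismatch in index conventions between $G$ and $\widetilde{G}$ is harmless since both matrices are Hermitian and the decay condition is symmetric). The paper, however, does not prove the general frame statement in the text at all: it quotes it from Fornasier--Gr\"ochenig \cite{FoG05}, and only sketches a shortcut for the case actually needed later, namely a Riesz basis, where the Gramian is invertible on $\ell^2(\Lambda)$, its inverse is exactly the Gramian of the biorthogonal system, and Jaffard's theorem applies directly with no pseudoinverse and no functional calculus. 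Your route is essentially a reconstruction of the method behind the cited reference: it buys a self-contained proof for genuinely overcomplete frames, at the price of the extra machinery (the identity $G^\dagger C=CS^{-1}$, the spectral gap at $0$, and the Dunford--Riesz integral with $(zI-G)^{-1}$ bounded in the $\mathcal{A}_s$-norm uniformly on the contour), whereas the paper's remark is shorter and suffices for its application. One small point worth making explicit if you write this up: Jaffard's theorem is usually stated for $\mathbb{Z}^d$ or separated index sets, so you should note that a relatively separated $\Lambda$ is a finite union of separated sets and that $\mathcal{A}_s(\Lambda)$ remains an inverse-closed Banach algebra in this generality, as is done in the localized-frames literature you are implicitly relying on.
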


For a Riesz basis this result can be proved directly. The Gramian
matrix $G$ of the basis with entries $( h_\mu , h_\lambda )$ is
invertible with inverse $(G\inv )_{\lambda \mu } = (\widetilde{h_\mu}, \widetilde{h_\lambda } )$. By a theorem of Jaffard \cite{Jaf} the
polynomial off-diagonal decay is preserved under inversion, whence  follows the
statement of the proposition (for a \rb ).

To apply Proposition~\ref{frameloc}, we need to compare the phase-space
localization of magnitude $s$ in Bourgain's \up\ with the localization
defined by the off-diagonal decay of the Gramian. 

\begin{lemma} \label{compare}

(i) If  $\{h_n\}_n$ is a \rb\ that has phase-space  localization
of magnitude $s>0$,  then $\{h_n\}_n$ is  $s$-localized
over the index set $\Lambda$ in the sense of \eqref{eq:c20}. 

(ii)   If  $\{h_n\}_n$ is a \rb\ that has phase-space localization of 
magnitude  $s>3d$, then the biorthogonal basis
$\{\widetilde{h_n}\}_n$ 
has phase-space localization of magnitude $t$ for any $t\in (d, s)$.  
\end{lemma}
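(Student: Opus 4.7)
\textbf{Plan for Lemma \ref{compare}.} For part (i), I would first translate the localization hypothesis into STFT language. Setting $\phi_n(x) = e^{-2\pi i b_n \cdot (x+a_n)} h_n(x+a_n)$, one has $|\phi_n(x)| = |h_n(x+a_n)|$ and $|\widehat{\phi_n}(\xi)| = |\widehat{h_n}(\xi + b_n)|$, so the hypothesis reads $\|\phi_n\|_{L^2_s} + \|\widehat{\phi_n}\|_{L^2_s} \le C$, which by the norm equivalence recalled in Section~4 is the same as $\sup_n \|\phi_n\|_{M^2_s} < \infty$. Since $h_n$ and $\pi(a_n, b_n)\phi_n$ agree up to a unimodular phase, covariance of the STFT yields
\[
\int |V_g h_n(x,\xi)|^2 (1+|(x,\xi)-\lambda_n|)^{2s}\,dx\,d\xi \le C',
\]
uniformly in $n$, with $\lambda_n = (a_n, b_n)$. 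I would then apply Moyal's identity $(h_n, h_m) = \iint V_g h_n\,\overline{V_g h_m}$, and use the elementary dichotomy $|\lambda_n - \lambda_m| \le |(x,\xi)-\lambda_n| + |(x,\xi)-\lambda_m|$ (so at least one of the two distances on the right exceeds $|\lambda_n-\lambda_m|/2$) to extract the factor $(1+|\lambda_n-\lambda_m|)^{-s}$ from the product of weights, leaving only a single weight inside the integrand. Two Cauchy--Schwarz estimates, together with uniform boundedness of $\|h_n\|_2$, then finish (i).

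For part (ii), (i) already places the Gramian $G_{nm} = (h_m, h_n)$ in the Jaffard off-diagonal class of order $s > d$, so Proposition~\ref{frameloc} (equivalently Jaffard's inversion theorem, as the authors note) yields the same decay for the inverse Gramian: $|T_{nm}| := |(\widetilde h_n, \widetilde h_m)| \le C(1+|\lambda_n-\lambda_m|)^{-s}$. I would then use biorthogonality to write $\widetilde h_n = \sum_m T_{nm} h_m$ and combine Minkowski's inequality with the Peetre-type bound $\||x-a_n|^t h_m\|_2 \le C''(1+|\lambda_n-\lambda_m|)^t$, valid for every $0 \le t \le s$ because $|x-a_n|^t \lesssim |x-a_m|^t + |a_m-a_n|^t$ and both $\||x-a_m|^t h_m\|_2$ and $\|h_m\|_2$ are uniformly bounded. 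This produces
\[
\||x-a_n|^t \widetilde h_n\|_2 \le C''' \sum_m (1+|\lambda_n-\lambda_m|)^{t-s},
\]
and since $\Lambda$ is relatively separated in $\rdd$, the sum is finite uniformly in $n$ once $s - t > 2d$. The symmetric argument on the Fourier side (using the localization of $\widehat{h_m}$ around $b_m$) controls $\||\xi - b_n|^t \widehat{\widetilde h_n}\|_2$.

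The main obstacle is the summability condition in (ii): the estimate only survives when $s - t > 2d$, which is exactly why the hypothesis $s > 3d$ is imposed, guaranteeing that the range of admissible $t$ is the nonempty open interval $(d, s-2d) \subseteq (d, s)$. Any $t$ in this range strictly exceeds $d$, which is all that is needed to feed into Lemma~\ref{l:un} in the final assembly of Theorem~\ref{th:1}.
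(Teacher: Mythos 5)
Your proposal is correct, and part (ii) follows essentially the same path as the paper: Gramian decay from (i), Jaffard/Proposition~\ref{frameloc} for the inverse Gramian, the expansion $\widetilde{h_n}=\sum_m(\widetilde{h_n},\widetilde{h_m})h_m$, the Peetre-type bound $\||x-a_n|^t h_m\|_2\le C(1+|\lambda_n-\lambda_m|)^t$, Minkowski's inequality, and summability over the relatively separated set $\Lambda$ under $s-t>2d$. Your explicit remark that the argument only yields $t\in(d,s-2d)$ is in fact exactly what the paper's own proof does (it chooses $d<t<s-2d$, possible since $s>3d$), so there is no gap relative to the paper, and any $t>d$ suffices for the application to Lemma~\ref{l:un}. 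Where you genuinely diverge is part (i): the paper never passes to the STFT there; it estimates $(h_n,h_m)$ directly by inserting the weights $(1+|x-a_n|)^s(1+|x-a_m|)^s$, using $\sup_x\big((1+|x-a_n|)(1+|x-a_m|)\big)^{-s}\le C(1+|a_n-a_m|)^{-s}$ and Cauchy--Schwarz, then repeating the same estimate on the Fourier side with $b_n,b_m$ and combining the two decays to control $(1+|\lambda_n-\lambda_m|)^{-s}$. You instead work once in phase space: Moyal's identity, covariance of $V_g$, the norm equivalence $M^2_s\simeq L^2_s\cap\cF L^2_s$ from Section~4, and the dichotomy $|\lambda_n-\lambda_m|\le|(x,\xi)-\lambda_n|+|(x,\xi)-\lambda_m|$ followed by Cauchy--Schwarz with $\|V_g h_m\|_2=\|h_m\|_2$ bounded. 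Both routes are valid and give the same conclusion; the paper's is more elementary (no STFT machinery needed for this lemma), while yours treats time and frequency simultaneously and fits naturally with the modulation-space framework already set up for Proposition~\ref{improve}.
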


\begin{proof}
(i)  We choose the set $\Lambda = \{(a_n,b_n)\}_n \subset\R^{2d}$ as the
appropriate index set. By Lemma~\ref{l:1}, $\Lambda $ is relatively
separated. Then 
\begin{multline*}
 | (h_n, h_m)| 
 \leq \\
 \leq
 \sup _{x\in \rd }
  \Big((1+|x-a_m|)(1+|x-a_n|)\Big)^{-s} \, \intrd h_n(x)
  (1+|x-a_n|)^{s} \overline{h_m(x)}  (1+|x-a_m|)^{s}  \, dx
  \\
\leq C(1+|a_m - a_n|)^{-s} \sup _{n\in\bN}\intrd |h_n(x)|^2 
  (1+|x-a_n|)^{2s}\, dx = (1+|a_m - a_n|)^{-s} S^2 \, .    
\end{multline*}
The argument for the Fourier transforms yields that
$$
| (h_n, h_m)| = | (\widehat{h_n}, \widehat{ h_m} )| \leq C(1+|b_m -
b_n|)^{-s} S^2 \, . 
$$
 By combining both estimates we arrive at
$$
| (h_n, h_m)| \leq C (1+|\lambda -\mu |)^{-s} \qquad \text{
  for all } \lambda , \mu \in \Lambda \, ,
$$
in other words, $\{h_n\}$ is $s$-localized over $\Lambda $. 

(ii)  By (i) $\{h_n\}$ is $s$-localized on $\Lambda \subseteq \rdd $
and $s>3d$. Therefore  Proposition~\ref{frameloc} applies and  the dual frame is also
$s$-localized as in~\eqref{eq:43}. Now we have
\[
\widetilde{h_n}=\sum_m(\widetilde{h_n}, \widetilde{h_m})h_m.\]
Next let $d<t<s-2d $ (which is possible by $s>3d$). A straightforward
calculation and (\ref{eq:abc}) give 
\begin{align*}
\int_{\R^d}|x-a_n|^{2t}|h_m(x)|^2dx& \le \int_{\R^d}C_t\big( |x-a_m|^{2t}
+ |a_n - a_m|^{2t} \big) |h_m(x)|^2dx \\
&\leq  C (S + |a_n - a_m|^{2t}) \leq C'(1+|\lambda_n-\lambda_m|)^{2t}.  
\end{align*}
With the triangle inequality for the $L^2_t$-norm we obtain 
\begin{align*}
\Big(\int_{\R^d}|x-a_n|^{2t}|\tilde{h}_n(x)|^2dx\Big)^{1/2}&= \Big(
  \intrd |\sum _m(\tilde{h}_n, \tilde{h}_m)h_m(x)|^2 |x-a_n|^{2t} \,
  dx\Big)^{1/2} \\
&  \le C \sum_m|(\tilde{h}_n,
\tilde{h}_m)|(1+|\lambda_n-\lambda_m|)^{t} \\
&\leq C \sum _m  |(1+|\lambda_n-\lambda_m|)^{t-s} \, .
\end{align*}
Since $\Lambda $ is relatively separated and $t-s < -2d $,  the  last sum is uniformly
bounded.

 Similar estimates hold for the Fourier transforms. Consequently, the
 dual basis is localized in the sense of 
\eqref{eq:ab} for  $t\in (d,s)$. 
\end{proof}

We now can  finish the proof of Theorem~\ref{th:1} for Riesz
bases.

We start with a \rb\  $\{f_n\}_n$ that satisfies \eqref{eq:ab}
for some $s>d$. Then we modify this basis by means of Proposition~\ref{improve} to
a \rb\ $\{h_n\}_n$ that satisfies the localization estimates \eqref{eq:ab} for \emph{all}
$t>0$. Finally, Lemma~\ref{compare} guarantees that the dual basis $\{
\widetilde{h_n} \}_n$ also satisfies the  localization
estimates~\eqref{eq:ab} for \emph{all} $t>0$. Thus all assumptions of
Lemma~\ref{l:un} are satisfied whence we conclude that the
localization of $\{h_n\}_n$ is $t\leq d$. This
is a contradiction to the construction of $\{h_n\}_n$. 
This means that the original basis $\{f_n\}_n$ cannot  satisfy the
strong localization estimate $s>d$, 
and  the proof of Theorem \ref{th:1} is complete. 
 
\section{Sampling in Bargmann-Fock spaces and Concluding Remarks}

We finally give an application of Theorem~\ref{th:1} to  several complex variables.

Recall that the Bargmann-Fock space $\cF $
consists of all entire functions on $\cd $  with norm $\|F\|_\cF ^2 = \int _{\cd } |F(z)|^2 \,
e^{-\pi |z|^2}\, dz$. The Bargmann-Fock space possesses the  reproducing
kernel $K_w(z) = e^{\pi \bar{w} \cdot z}$ for $w,z\in \cd $, so that
$F(w) = (F, K_w)$. Consequently, a set $\{ K_\lambda : \lambda \in
\Lambda \}$ is a \rb\  for $\cF $, \fif\ $\Lambda \subseteq \cd$ is
simultaneously  sampling and  interpolating  for $\cF $, i.e., $\sum _\lambda |F(\lambda )|^2
e^{-\pi |\lambda |^2} \asymp \|F\|_\cF ^2$,  and for every $c\in \ell
^2(\Lambda )$ there exists a (unique) $F\in \cF $, such that
$F(\lambda )e^{-\pi |\lambda |^2/2} = a_\lambda $. 

 The following  result is an immediate consequence of  Theorem~\ref{th:1}.

\begin{theorem}\label{fock}
  The Bargmann-Fock space does not admit a set $\Lambda \subseteq \cd
  $ that is simultaneously  sampling and  interpolating. 
\end{theorem}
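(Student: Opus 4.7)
The plan is to reduce Theorem~\ref{fock} to Theorem~\ref{th:1} via the Bargmann transform. Recall that the Bargmann transform $B:L^2(\R^d)\to\cF$ is a unitary isomorphism, and that it identifies phase-space shifts of the normalized Gaussian $g(t)=2^{d/4}e^{-\pi|t|^2}$ with (normalized) reproducing kernels: up to a unimodular factor $e^{i\varphi(a,b)}$, one has
\[
B\bigl(\pi(a,b)g\bigr)(z) \;=\; e^{i\varphi(a,b)}\, e^{-\pi|\lambda|^2/2}\, K_\lambda(z),
\qquad \lambda = a+ib\in\cd.
\]
Thus the map $(a,b)\mapsto\lambda$ sets up a bijection between $\R^{2d}$ and $\cd$ under which phase-space shifts of $g$ correspond (up to unimodular factors and the fixed weight $e^{-\pi|\lambda|^2/2}$) to reproducing kernels.

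First, I would invoke the equivalence already observed in the paper: a set $\Lambda\subseteq\cd$ is simultaneously sampling and interpolating for $\cF$ if and only if $\{K_\lambda:\lambda\in\Lambda\}$ is a Riesz basis for $\cF$. Arguing by contradiction, assume such a $\Lambda$ exists. Since the nonzero weights $e^{-\pi|\lambda|^2/2}$ do not affect the Riesz basis property, $\{e^{-\pi|\lambda|^2/2}K_\lambda:\lambda\in\Lambda\}$ is also a Riesz basis for $\cF$. Applying $B^{-1}$, which is unitary, we obtain a Riesz basis $\{f_\lambda:\lambda\in\Lambda\}$ of $L^2(\R^d)$ with
\[
f_\lambda \;=\; e^{-i\varphi(a_\lambda,b_\lambda)}\,\pi(a_\lambda,b_\lambda)g,
\]
where $\lambda=a_\lambda+ib_\lambda$.

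Next, I would verify that this basis has phase-space localization of magnitude $s$ for every $s>0$. Using $|\pi(a,b)g(x)|=|g(x-a)|$ and the fact that $\widehat{g}=g$, together with $\widehat{\pi(a,b)g}(\xi)=e^{-2\pi i a\cdot\xi}\pi(b,-a)\widehat{g}(\xi)$, a change of variables gives, for every $\lambda$,
\[
\int_{\R^d}|x-a_\lambda|^{2s}|f_\lambda(x)|^2\,dx
\;+\;\int_{\R^d}|\xi-b_\lambda|^{2s}|\widehat{f_\lambda}(\xi)|^2\,d\xi
\;=\;2\int_{\R^d}|y|^{2s}|g(y)|^2\,dy,
\]
which is finite and independent of $\lambda$. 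Hence the supremum over $\lambda$ on the left-hand side of~\eqref{eq:c1} is finite for every $s>0$. Choosing any $s>d$ contradicts Theorem~\ref{th:1}, and consequently no such $\Lambda$ can exist.

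There is no real obstacle in this argument; the proof is essentially a translation of Theorem~\ref{th:1} across the Bargmann transform. The only nontrivial input is the standard intertwining formula relating phase-space shifts of the Gaussian to reproducing kernels of $\cF$, which is routine in \tfa .
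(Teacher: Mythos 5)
Your proposal is correct and follows essentially the same route as the paper: translate the statement through the Bargmann transform, identify (normalized) reproducing kernels with phase-space shifts of the Gaussian, observe that such a Riesz basis would have phase-space localization of every magnitude $s>0$, and contradict Theorem~\ref{th:1}. The only cosmetic wobble is the sentence claiming that multiplying by the weights $e^{-\pi|\lambda|^2/2}$ ``does not affect the Riesz basis property'' (these weights tend to zero, so that principle is false in general); it is harmless here because the sampling-and-interpolation condition is equivalent to the \emph{normalized} kernels forming a Riesz basis, which is exactly what you feed into the unitary $B^{-1}$.
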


\begin{proof}
We use the Bargmann  transform defined as 
\begin{equation*}
 \label{bt}
   \cB f(z)=F(z)= 2^{d/4}e^{-\pi z^2 /2}\int_{\rd} f(t)e^{-\pi
    t\cdot t}e^{2\pi t \cdot z} dt \, \qquad z\in \cd \, ,
  \end{equation*}
and  translate Theorem~\ref{th:1} into a statement of complex
analysis. The Bargmann transform  is unitary from $\lrd $ onto $\cF $ and maps the \tfs s of the
Gaussian $e^{-2\pi i w_2 \cdot x} e^{-\pi (x-w_1)^2}= \pi (w_1,-w_2)g(x)$ to the
reproducing kernel $ e^{i\alpha} K_w  \, e^{-|w|^2/2}$ for some phase
factor $|c|=1$. Thus $\Lambda \subseteq\cd  $ is a set of sampling and
interpolation, \fif\ $\{K_\lambda : \lambda \in \Lambda \}$ is a Riesz
basis for $\cF $. 

Clearly, the Gaussian satisfies the localization
condition~\eqref{eq:ab} for all $s>0$. By  Theorem~\ref{th:1} a set of
\tfs s of the Gaussian cannot from a \rb\ for $\lrd $. Consequently,
no set $\Lambda \subseteq \cd $ can be simultaneously  sampling and interpolating.  
\end{proof}

\rem\ Theorem~\ref{fock} is a statement of complex analysis. Indeed,
in dimension $d=1$ is it well-known and can be proved with classical
methods from complex variables. 
In higher dimensions the result was expected, but seems to have been
open so far. 
A proof with different methods  has  been
announced  in~\cite{AK}. 


\vs

\textbf{Final remarks.}
It is interesting to compare the critical value of the localization
parameter  $s$ in higher
dimensions with the higher dimensional versions of the Balian-Low
theorem.   
It is known that for $d=1$ and every $s<2$ there exists a function $f$ with
\[
\int_{\R} |x|^s|f|^2<\infty,\quad\int_{\R}|\xi|^s|\hat{f}|^2<\infty,\]
such that $\{e^{2\pi i nt}f(x-m)\}_{n,m\in \Z}$ is an orthogonal basis
for $L^2(\R)$. (A more precise result is obtained in \cite{BCGP}, we
refer the reader to \cite{Jan} also.) Thus in one-dimensional setting
the restrictions on the localization properties of an arbitrary
orthogonal basis and of a Gabor system can be observed only at one
point of our scale, $p=2$. The situation changes drastically when we
consider higher dimensional spaces. There exists an orthogonal basis
$\{f_n\}$ for $L^2(\R^d)$  that satisfies 
\[      
\sup _{n\in \bN } \Big( \inf_{a\in \rd }\int_{\R^d}|x-a|^{2d}|f_n(x)|^2  +
\inf_{b\in \rd} \int_{\R^d}|\xi-b|^{2d}|\widehat{f_n}(\xi)|^2 \Big) <\infty, 
\]
but for every orthogonal basis of the form $\{e^{2\pi i
  nt}f(x-m)\}_{n,m\in \Z^d}$ a multidimensional version of Balian-Low
theorem (see \cite{CP} and references therein) implies 
\[
\sup _{n\in \bN }\inf_{a\in \rd } \int_{\R^d}|x-a|^2|f_n(x)|^2=\infty\quad{\rm{or}}\quad
\sup _{n\in \bN }\inf_{b\in \rd } \int_{\R^d}|\xi-b|^2|\widehat{f_n}(\xi)|^2=\infty.\]
The reason for it could lie in the choice of the radial  weight
$v_p=|x|^p$. It seems that the product weight 
$w_s(x)=\Big((1+|x_1|)...(1+|x_d|)\Big)^s$ might be a
more natural weight in higher dimensions. 

There are  a $(p,q)$-version of Bourgain's theorem~\cite{BP} and a
$(p,q)$ version of the Balian-Low theorem~\cite{Ga} for
$1/p+1/q=1$. It would be interesting to obtain a $(p,q)$-version of
Theorem~\ref{th:1}. 

\vs

\small{\textbf{Acknowledgment.} K.~G.~ would like to thank the Department of
Mathematics of NTNU Trondheim for its hospitality and great 
research environment  during the research  on this paper. Both
authors would like to thank the Centre de Recerca 
Matem\`atica 
 Barcelona for  ideal conditions to complete this  work. }

 \bibliographystyle{abbrv}

\end{document}